\newcommand{\pan}{\mathfrak{P}}
\newcommand{\vel}{\mathcal{V}}
\newcommand{\wgotica}{\mathfrak{w}}
\newcommand{\sgotica}{\mathfrak{s}}
\newtheorem{theorem}{Theorem}[section]
\newtheorem{example}[theorem]{Example}
\numberwithin{equation}{section}
\newcommand{\R}{\mathbb{R}}
\newcommand{\N}{\mathbb{N}}
\renewcommand{\epsilon}{\varepsilon}
\newcommand{\eps}{\varepsilon}
\renewcommand{\le}{\leqslant}
\renewcommand{\ge}{\geqslant}
\begin{document}

\author[$\sharp$,1]{Serena Dipierro}

\author[2,3]{Luca Lombardini}

\author[2,4]{Pietro Miraglio}

\author[1,2,5,6]{Enrico Valdinoci}

\affil[$\sharp$]{\footnotesize Corresponding author\medskip }

\affil[1]{\footnotesize School of Mathematics and Statistics,
University of Melbourne,
Richard Berry Building,
Parkville VIC 3010,
Australia \medskip } 

\affil[2]{\footnotesize Dipartimento di Matematica, Universit\`a degli studi di Milano,
Via Saldini 50, 20133 Milan, Italy \medskip }

\affil[3]{\footnotesize Facult\'e des Sciences,
Universit\'e de Picardie Jules Verne, 
33, rue Saint-Leu,
80039 Amiens CEDEX 1, France\medskip}

\affil[4]{\footnotesize
Departament de Matem\`atica Aplicada I,
Universitat Polit\`ecnica de Catalunya,
Diagonal 647,
08028 Barcelona,
Spain\medskip}

\affil[5]{\footnotesize Weierstra{\ss} Institut f\"ur Angewandte
Analysis und Stochastik, Mohrenstra{\ss}e 39, 10117 Berlin, 
Germany \medskip}

\affil[6]{\footnotesize Istituto di Matematica Applicata e Tecnologie Informatiche,
Consiglio Nazionale delle Ricerche,
Via Ferrata 1, 27100 Pavia, Italy \medskip }

\title{The Phillip Island penguin parade \\
(a mathematical treatment)\thanks{Emails:
{\tt sdipierro@unimelb.edu.au}, {\tt luca.lombardini@unimi.it},
{\tt pietro.miraglio@unimi.it}, {\tt enrico@mat.uniroma3.it} }}

\date{}

\maketitle

\noindent{\it 2010 Subject Classification:} 92B05, 92B25, 37N25.

\noindent{\it Keywords:} Population dynamics, Eudyptula minor, Phillip Island, mathematical models.\bigskip\bigskip

\begin{abstract}
Penguins are flightless, so they are forced to walk while on land. 
In particular, they show rather specific behaviors in their homecoming,
which are interesting to
observe and to describe analytically.

In this paper, we present a simple mathematical formulation to
describe the little penguins parade in Phillip Island.

We observed that penguins have the tendency
to waddle back and forth on the shore to create
a sufficiently large group and then walk home compactly together.

The mathematical framework that we introduce describes
this phenomenon, by taking into account
``natural parameters'', such as the eye-sight of the penguins,
their cruising speed and the possible ``fear'' of animals. On the one hand,
this favors the formation of conglomerates of penguins that gather together,
but, on the other hand, this may lead to the ``panic'' of isolated and exposed
individuals.

The model that we propose is based on a set
of ordinary differential equations. Due to the discontinuous
behavior of the speed of the penguins, the mathematical
treatment (to get existence and uniqueness of
the solution) is based on a ``stop-and-go'' procedure.

We use this setting to provide rigorous examples in which
at least some penguins manage to safely return home (there are also cases
in which some penguins freeze due to panic).

To facilitate the intuition of the model,
we also present some simple numerical simulations
that can be
compared with the actual movement of the penguins parade.
\end{abstract}

\section{Introduction}\label{INTRO}

The goal of this paper is to provide a simple, but rigorous, mathematical
model which describes the formation of groups of penguins
on the shore at sunset. 


The results that we obtain are:
\begin{itemize}
\item The construction of a mathematical model
to describe the formation of groups of penguins on the shore
and their march towards their burrows; this model is based
on systems of ordinary differential equations, with a number of degree
of freedom which is variable in time (we show that the model
admits a unique solution, which needs to be appropriately defined).
\item Some rigorous mathematical results
which provide sufficient conditions for a group of penguins to reach the burrows.
\item Some numerical simulations which
show that the mathematical model well predicts, at least\footnote{It would be
desirable to have empirical data about the formation of penguins clusters
on the shore and their movements, in order to compare and
adapt the model to experimental
data and possibly give a quantitative description of concrete
scenarios.}
at a qualitative level,
the formation of clusters of penguins and their march towards the burrows;
these simulations are easily implemented by images and videos.
\end{itemize}
The methodology used is based on:
\begin{enumerate} 
\item direct observations on site, strict
interactions with experts in biology
and penguin ecology, 
\item mathematical formulation
of the problem and rigorous deductive arguments, and 
\item numerical simulations.
\end{enumerate}
In this introduction, we will describe the ingredients which lead
to the construction of the model, presenting its basic features and also its limitations.
Given the interdisciplinary flavor of the subject, it is
not possible to completely split the biological discussion
from the mathematical formulation, but we can mention that:
\begin{itemize}
\item The main mathematical equation is given in formula~\eqref{EQ}.
Before~\eqref{EQ}, the main ingredients coming from live observations
are presented. After~\eqref{EQ}, the mathematical quantities
involved in the equation are discussed and elucidated.
\item The existence and uniqueness theory for equation~\eqref{EQ}
is presented in Section~\ref{EXUT}.
\item Some rigorous mathematical results about equation~\eqref{EQ}
are given in Section~\ref{HOME} (roughly speaking,
these are results which give sufficient conditions
on the initial conditions of the system and on the external environment
for the successful homecoming of the penguins, and their precise
formulation requires the development of the mathematical framework
in~\eqref{EQ}).
\item In Section~\ref{VIDEO} we present numerics, images and videos which favor the intuition and
set the mathematical model of~\eqref{EQ} into
a concrete framework, which is easily comparable with the
real-world phenomenon.
\end{itemize}
Before that, we think that it is important to describe our experience of
the penguins parade in Phillip Island, both to allow the
reader which is not familiar with the event to concretely take part in it,
and to describe some peculiar environmental aspects which are
crucial to understand our description (for instance,
the weather in Phillip Island is completely different from the Antarctic one,
so many of our considerations are meant to be limited to this particular 
habitat) -- also, our personal experience
in this bio-mathematical adventure is a crucial point, in our opinion,
to describe how
scientific curiosity can trigger academic
activities. The reader who is not interested in the description of
the penguins parade in Phillip Island can skip this part and go directly 
to Subsection~\ref{QUIQUI}.

\subsection{Description of the penguins parade}
An  extraordinary event in the state of Victoria, Australia,
consists in the march of the little penguins
(whose scientific name is ``Eudyptula minor'') who live
in Phillip Island. At sunset, when it gets too dark for the little penguins to
hunt their food in the sea, they come out
to return to their homes
(which are small cavities in the terrain, that are located
at some dozens of meters from the water edge).

As foreigners in Australia, our first touristic trip
in the neighborhoods of Melbourne consisted in a
one-day excursion to Phillip Island, enjoying
the presence of wallabies, koalas and kangaroos, visiting
some farms during the trip, walking on the spectacular
empty beaches of the coast and -- cherry on top --
being delighted by the show
of the little penguins parade.

Though at that moment
we were astonished by the poetry of
the natural exhibition of the penguins, later on, driving back to Melbourne
in the middle of the night,
we started thinking back to what we saw and
attempted to understand the parade from a rational,
and not only emotional, point of view
(yet we believe that the rational approach was not diminishing
but rather enhancing the sense of our intense experience).
What follows is indeed the mathematical description that came out
of the observations on site at Phillip Island, enriched by the scientific discussions
we later had with 
penguin ecologists.

\subsection{Observed behaviors of penguins in the parade}\label{QUIQUI}

By watching the penguins parade in
Phillip Island, it
seemed to us that some simple
features appeared in the very unusual pattern followed by the little penguins:
\begin{itemize}
\item Little penguins have the strong tendency to
gather together in a sufficiently large number before starting their
march home.
\item They have the tendency to march on a straight line,
compactly arranged
in a cluster, or group.
\item To make this group, they will move back
and forth, waiting for other fellows or even going back to
the sea if no other mate is around.
\item If, by chance or by mistake, a little penguin remains isolated,
(s)he can panic\footnote{In this paper, the use of
terminology such as ``panic'' has to be intended in a strictly mathematical sense:
namely, in the equation that we propose, there is a term which makes
the velocity stop. The use of the word ``panic'' is due to the fact that
this interruption in the penguin's movement is not due to physical impediments, but rather to the fact that no other
penguin is in a sufficiently small neighborhood. Notice that it may be the case that
a ``penguin in panic'' is not really in danger; simply, at a mathematical level,
a quantified version of the notion of ``isolation'' leads the penguin to stop.
For this reason, we think that the word ``panic''
is rather suggestive
and actually sufficiently appropriate to describe a psychological 
attitude of the animal which could also turn out to be not
necessarily convenient from the purely rational point of view:
indeed, a ``penguin in panic'' risks to remain more exposed than
a penguin that keeps moving (hence panic seems to be a good word to
describe psychological uneasiness, with or without direct
cause, that produces hysterical or irrational behavior).

It would be desirable to have further non-invasive tests to measure
how the situation that we describe by the word ``panic'' is felt by the penguin
at an emotional level (at the moment, we are not aware of experiments like
this in the literature). Also, it would be highly desirable to have some
precise experiments to determine how many penguins do not manage
to return to their burrows within
a certain time
after dusk and stay either in the water or in the vicinity of the shore.

On the one hand, in our opinion, it is likely that rigorous experiments on site will
demonstrate that the phenomenon for which an isolated penguin stops
is rather uncommon in nature (and, when it happens,
it may be unrelated to emotional feelings such as fear). On the other hand, our model is general
enough to take into account
the possibility that a penguin stops its march, and, at a quantitative level,
we emphasized this ``panic'' feature in the pictures of
Section \ref{VIDEO} to make the situation visible.

The reader who does not want to take into account the panic function in the model
can just set this function to be identically equal to~$1$
(the mathematical formulation of this remark will be given
in footnote~\ref{NOPANIC}). In this particular case,
our model will still exhibit the formation of groups of penguins moving together.}
and this fact
can lead to a complete\footnote{Though no experimental test has been run on this
phenomena,
in the parade that we have seen live it indeed happened
that one little penguin remained isolated from the others
and ``panic prevailed'': even though (s)he was absolutely fit
and no concrete obstacle
was obstructing the motion, (s)he got completely stuck for
half an hour and the staff of the Nature Park had
to go and provide assistance. We stress again that
the fact that the penguin stopped moving did not seem to
be caused by any physical impediment
(as confirmed to us by the Ranger on site), since no extreme environmental condition
was occurring,
the animal
was not underweight, and was able to come out of the water
and move effortlessly on the shore autonomously
for about~$15$ meters, before suddenly stopping.}
freeze.
\end{itemize}
For a short video (courtesy of Phillip Island Nature Parks)
of the little penguins parade, in which the formation
of groups is rather evident, see e.g. \\
{\tt 
https://www.ma.utexas.edu/users/enrico/penguins/Penguins1.MOV}
\medskip

The simple features listed above are likely to be a consequence of
the morphological structure of the little penguins and of
the natural environment. As a matter of fact,
little penguins are a marine-terrestrial species.
They are highly efficient swimmers
but possess a rather inefficient form of locomotion on land
(indeed, flightless penguins, as the ones
in Phillip Island, 
waddle, more than walk). At dusk,
about 80 minutes after sunset according to the data in~\cite{CH5},
little penguins return ashore
after their fishing activity in the sea.
Since their bipedal locomotion is slow and rather goofy
(at least from the human subjective perception, but also in comparison with
the velocity or agility that is well known to be
typical of predators in nature), and the
easily
recognizable countershading of the penguins
is likely to make them visible to predators,
the transition
between the marine and terrestrial environment
may be particularly stressful\footnote{At the moment,
there seems to be no complete experimental evidence measuring the ``danger
perceived
by the penguins'', and, of course, the words ``danger'' 
and ``fear'' have to be intended -- for the purposes of this paper, and strictly speaking --
as a human
interpretation, without experimental testing, and thus between ``quotation marks''. Nevertheless,
given the
swimming
ability of the penguins and the environmental conditions,
one may well conjecture that
an area of high danger for a penguin is the one adjacent to the
shore-line,
since this is a habitat which provides little or no shelter,
and it is also in a regime of reduced visibility.

As a matter of fact, we have been told that
the Rangers in Phillip Island implemented a control on the
presence of the foxes in the proximity of the shore, with the aim of
limiting the number of possible predators.

Whether the penguins really feel an emotion comparable to what humans call ``fear''
or ``panic'' is not within the goals of this paper (it is of course also possible that,
at a neurological level, the behavior of the penguins follows
different patterns than human emotions). Nevertheless we use here
the words ``fear'' and ``danger'' to give an easy-to-communicate
justification of the mathematical model. Of course, any progress in the
study of the emotional behavior of penguins would be highly desirable in this sense.}
for the penguins 
(see~\cite{CH6}) and this fact is probably related to the
formation of penguins groups 
(see e.g.~\cite{CH1}). 
Thus, in our opinion, the rules
that we have listed may be seen as the outcome
of the difficulty of the little penguins to perform their transition
from a more favorable environment to an habitat in which
their morphology turns out to be suboptimal.

\subsection{Mathematical formulation}

To translate  into a mathematical framework the
simple observations on the penguins behavior that we
listed in Subsection~\ref{QUIQUI},
we propose the following equation:
\begin{equation} \label{EQ}
\dot{p}_i(t) = \pan_i\big(p(t),w(t);t\big) \,\Big( \eps+\vel_i\big(
p(t),w(t);t\big)
\Big)+f\big(p_i(t),t\big).\end{equation}
Here, the following notation is used:
\begin{itemize}
\item The function~$n:[0,+\infty)\to\N_0$, where $\N_0:=\N\setminus\{0\}$,
is piecewise constant and nonincreasing, namely
there exist a (possibly finite)
sequence~$0=t_0<t_1<\dots<t_j<\dots$ 
and integers~$n_1>\dots>n_j>\dots$
such that~$n(t)=n_j\in\N_0$ for any~$t\in(t_{j-1},t_j)$.
\item
At time~$t\ge0$, there is a set of~$n(t)$
groups of penguins~$p(t)=\big(p_1(t),\dots,p_{n(t)}(t)\big)$.
That is, at time~$t\in(t_{j-1},t_j)$ there is a set
of~$n_j$ 
clusters of penguins~$p(t)=\big(p_1(t),\dots,p_{n_j}(t)\big)$.
\item For any~$i\in\{1,\dots,n(t)\}$, the coordinate~$p_i(t)\in\R$
represents the position of a group of penguins
on the real line:
each of these groups contains a certain number
of little penguins, and this number is denoted by~$w_i(t)\in\N_0$.
We also consider the array~$w(t)=
\big(w_1(t),\dots,w_{n(t)}(t)\big)$.

We assume that~$w_i$ is piecewise constant, namely that~$w_i(t)=\bar w_{i,j}$
for any~$t\in(t_{j-1},t_j)$, for some~$\bar w_{i,j}\in\N_0$,
namely the number of little penguins in each group remains
constant, till the next penguins join the group at time~$t_j$ 
(if, for the sake of simplicity,
one wishes to think that initially all the little penguins are
separated one from the other, one may also suppose that~$w_i(t)=1$
for all~$i\in\{1,\dots,n_1\}$ and~$t\in[0,t_1)$).

Up to renaming the variables, we suppose that the initial position of the groups
is increasing with respect to the index, namely
\begin{equation}\label{INI P}
p_1(0)<\dots<p_{n_1}(0).
\end{equation}
\item The parameter~$\eps\ge0$ represents
a drift velocity of the penguins towards their house,
which is located\footnote{For concreteness, if~$p_i(T)=H$ for some~$T\ge0$,
we can set~$p_i(t):=H$ for all~$t\ge T$ and remove~$p_i$ \label{STAY}
from the equation of motion -- that is, the penguin has safely come back
home and (s)he can go to sleep. In real life penguins have some social life before going to sleep, but we are not taking this under consideration for the moment.}
at the point~$H\in(0,+\infty)$.
\item For any~$i\in\{1,\dots,n(t)\}$,
the quantity~$\vel_i\big(p(t),w(t);t\big)$
represents the strategic velocity of the~$i$th
group of penguins and 
it can be considered as a function with domain varying in time
$$ \vel_i(\cdot,\cdot;t):\R^{n(t)}\times\N^{n(t)}\to \R,$$
i.e.
$$ \vel_i(\cdot,\cdot;t):\R^{n_j}\times\N^{n_j}\to \R
\quad{\mbox{ for any }}t\in(t_{j-1},t_j),$$
and,
for any~$(\rho,w)=
(\rho_1,\dots,\rho_{n(t)},w_1,\dots,w_{n(t)})
\in\R^{n(t)}\times\N^{n(t)}$,
it is of the form
\begin{equation}\label{VELOCITY}
\vel_i\big(\rho,w;t\big):=\Big(1-\mu\big(w_i\big)\Big)
\,m_i\big(\rho,w;t\big)
+v\mu\big(w_i\big).\end{equation}
In this setting, for any~$(\rho,w)=
(\rho_1,\dots,\rho_{n(t)},w_1,\dots,w_{n(t)})
\in\R^{n(t)}\times\N^{n(t)}$,
we have that
\begin{equation}\label{emme i} 
m_i\big(\rho,w;t\big):=
\sum_{j\in\{1,\dots,n(t)\}} {\rm sign}\,(\rho_j-\rho_i)\;w_j\;
\sgotica(|\rho_i-\rho_j|),
\end{equation}
where~$\sgotica\in {\rm Lip}([0,+\infty))$ is
nonnegative and nonincreasing and, as usual,
we denoted the ``sign function'' as
$$ \R\ni r\mapsto {\rm sign}\,(r):=\left\{
\begin{matrix}
1 & {\mbox{ if }} r>0,\\
0 & {\mbox{ if }} r=0,\\
-1& {\mbox{ if }} r<0.
\end{matrix}
\right.$$
Also, for any~$\ell\in\N$, we set
\begin{equation}\label{MU} \mu(\ell):=\left\{
\begin{matrix}
1 & {\mbox{ if }} \ell\ge\kappa,\\
0 & {\mbox{ if }} \ell\le\kappa-1,
\end{matrix}
\right.
\end{equation}
for a fixed~$\kappa \in\N$, with~$\kappa\ge2$,
and~$v>\eps$.

In our framework, the meaning of the
strategic velocity of the~$i$th
group of penguins is the following:
\begin{itemize}
\item When the group of penguins is too small
(i.e. it contains less than~$\kappa$ little penguins),
then the term involving~$\mu$ vanishes, thus the
strategic velocity reduces to the term given by~$m_i$;
this term, in turn, takes into account the position of the other
groups of penguins.
That is, each penguin is endowed with a ``eye-sight''
(i.e., the capacity of seeing the other penguins that are ``sufficiently close''
to them), which is
modeled by the function~$\sgotica$
(for instance, if~$\sgotica$ is identically equal to~$1$, then
the penguin has a ``perfect eye-sight'';
if~$\sgotica(r)=e^{-r^2}$, then the penguin sees close
objects much better than distant ones; if~$\sgotica$
is compactly supported, then the penguin does not see too far
objects, etc.). Based on the position of the other mates that (s)he
sees, the penguin has the tendency to move either forward
or backward (the more penguins (s)he sees ahead, the more
(s)he is inclined to move forward, 
the more penguins (s)he sees behind, the more
(s)he is inclined to move backward, and nearby penguins
weight more than distant ones, due to the monotonicity
of~$\sgotica$). This strategic tension coming from the position
of the other penguins is encoded by the function~$m_i$.
\item When the group of penguins is sufficiently large
(i.e. it contains at least~$\kappa$ little penguins),
then the term involving~$\mu$ is equal to~$1$;
in this case,
the
strategic velocity is~$v$
(that is, when the group of penguins is 
sufficiently rich in population, its strategy
is to move forward with cruising speed equal to~$v$). 
\end{itemize}
\item The function~$\pan_i\big(p(t),w(t);t\big)$
represents the panic that the $i$th group
of penguins fears in case of extreme isolation 
from the rest of the herd. Here,
we take~$\overline d>\underline d>0$,
a nonincreasing\footnote{Here the notation ``${\rm Lip}$''
stands for bounded and Lipschitz continuous functions.}
function~$\varphi\in {\rm Lip}(\R,[0,1])$,
with~$\varphi(r)=1$ if~$r\le \underline d$
and~$\varphi(r)=0$ if~$r\ge \overline d$,
and, for any~$\ell\in\N_0$,
\begin{equation}\label{OMEGA} \wgotica(\ell) :=
\left\{
\begin{matrix}
1 & {\mbox{ if }} \ell\ge2,\\
0 & {\mbox{ if }} \ell=1,
\end{matrix}
\right.\end{equation}
and we take as panic function\footnote{The case of~$\varphi$
identically equal to~$1$ can be also comprised \label{NOPANIC}
in our setting. In this case, also $\pan_i$ is identically one
(which corresponds to the case in which penguins
do not panic).}
the function with variable domain
$$ \pan_i(\cdot,\cdot;t):\R^{n(t)}\times\N^{n(t)}\to[0,1],$$
i.e.
$$ \pan_i(\cdot,\cdot;t):\R^{n_j}\times\N^{n_j}\to[0,1]
\quad{\mbox{ for any }}t\in(t_{j-1},t_j),$$
given,
for any~$(\rho,w)=
(\rho_1,\dots,\rho_{n(t)},w_1,\dots,w_{n(t)})
\in\R^{n(t)}\times\N^{n(t)}$, by
\begin{equation} \label{PAN}
\pan_i\big(\rho,w;t\big) :=
\max\Big\{ \wgotica(w_i),\;
\max_{{j\in\{1,\dots,n(t)\}}\atop{j\ne i}}
\varphi\big(|\rho_i-\rho_j|\big)
\Big\}.\end{equation}
The panic function describes the fact that,
if the group gets scared, then it has the tendency
to suddenly
stop. This happens
when the group contains only one element (i.e., $\wgotica_i=0$)
and the other groups are far apart (at distance
larger than~$\overline d$).

Conversely, if the group contains at least two little penguins,
or if there is at least another group sufficiently close
(say at distance smaller than~$\underline d$), then
the group is self-confident,
namely the function~$\pan_i\big(p(t),w(t);t\big)$
is equal to~$1$ and the total 
intentional velocity of the group coincides
with the strategic velocity.

Interestingly, the panic function~$\pan_i$ may
be independent of the eye-sight function~$\sgotica$: namely
a little penguin can panic if (s)he 
feels alone and too much exposed, even if (s)he can see
other little penguins (for instance, if~$\sgotica$
is identically equal to~$1$, the little penguin always sees the other
members of the herd, still (s)he can panic if they are too far apart).
\item The function~$f\in{\rm Lip}(\R\times[0,+\infty))$
takes into account the environment. For a neutral environment,
one has that this term vanishes. In practice,
it may take into account the ebb and flow
of the sea on the foreshore (where the little penguins parade
starts), the possible
ruggedness of the terrain, the
presence of predators, etc. (as a variation, one
can consider also a stochastic version of this term).
\end{itemize}
Given the interpretation above, equation~\eqref{EQ}
tries to comprise the pattern that we described in words and
to set the scheme of motion of the little penguins
into a mathematical framework.

\subsection{Comparison with the existing literature}
We observe that, to the best of our knowledge,
there is still no specific
mathematical attempt to describe in a concise
way the penguins parade. The mathematical
literature of penguins has mostly focused on the
description of the heat flow in the penguins feathers (see~\cite{MR2899094}),
on the numerical analysis
to mark animals for later identification
(see~\cite{MR2926716}),
on the statistics of the 
Magellanic penguins at sea
(see~\cite{MR2718048}),
on the hunting strategies of fishing penguins (see~\cite{fish}),
and
on the isoperimetric arrangement of the 
Antarctic penguins to prevent the heat dispersion
caused by the polar wind and on the crystal
structures and solitary waves produced by such arrangements
(see~\cite{1367-2630-15-12-125022}
and~\cite{MR3235841}). We remark that
the climatic situation in Phillip Island
is rather different from the Antarctic one and, given
the very mild temperatures of the area, we do not think that
heat considerations should affect too much the behavior and
the moving
strategies of the Victorian little penguins and their tendency
to cluster seems more likely to be a defensive
strategy against possible predators.
\medskip

Though no mathematical formulation of the little penguins parade
has been given till now, a series
of experimental analysis has been recently performed
on the specific environment of Phillip Island.
We recall, in particular, \cite{CH1}, which describes
the association of the little penguins in groups, by collecting data
spanning over several years,
\cite{CH2}, which describes the effect of fog on the orientation
of the little penguins
(which may actually do not come back home
in conditions of poor visibility), \cite{CH3} and~\cite{CH4},
in which a data analysis is performed to show the fractal structure
in space and time for the foraging of the little penguins, also
in relation to L\'evy flights and fractional Brownian motions.

For an exhaustive list of publications focused on the behavior
of the little penguins of Phillip Island, we refer to the web page\\
{\tt https://www.penguins.org.au/conservation/research/publications/}\\
This pages contains more than~$160$ publications related
to the environment of Phillip Island, with special emphasis on the biology of little penguins.

\subsection{Discussion on the model proposed: simplifications,
generalizations and further directions of investigation}

We stress that the model proposed in~\eqref{EQ} is of course a
dramatic
simplification of ``reality''.
As often happens in science indeed, several simplifications
have been adopted in order
to allow a rigorous mathematical treatment
and handy numerical computations: nevertheless the model is
already rich enough to detect some specific features
of the little penguins parade, such as the formation of groups, the oscillatory waddling of the penguins and the possibility that panic interferes with rationally more convenient motions. Moreover, our model is flexible enough to allow
specific distinctions between the single penguins (for instance,
with minor modifications\footnote{In particular, one can replace the quantities $v,\sgotica,\mu,\kappa,\varphi$ with $v_i,\sgotica_i,\mu_i,\kappa_i,\varphi_i$ if one wants to customize these features for every group.}, one can take into account the possibility
that different penguins have a different eye-sight, that they have a different reaction
to isolation and panic, or that they exhibit some specific social behavior
that favors the formation of clusters selected by specific characteristics); 
similarly, the modeling of the habitat may also encode different possibilities
(such as the burrows of the penguins being located in different places),
and multi-dimensional models can be also constructed using similar ideas.\medskip

Furthermore, natural modifications lead to the possibility that
one or a few penguins may leave an already formed group\footnote{For
instance, rather than forming one single group, 
the model can still consider the penguins of the
cluster as separate elements, each one with its
own peculiar behavior.} (at the moment, for simplicity,
we considered here the basic model in which, 
once a cluster is made up,
it keeps moving without losing any of its elements -- we plan to
address in a future project
in detail the case of groups which may also
decrease the number of components, possibly in dependence
of random fluctuations or social considerations among
the members of the group).\medskip

In addition, for simplicity,
in this paper
we modeled each group to be located at a precise point:
though this is not a
completely unrealistic assumption (given that the scale of
the penguin is much smaller than that of the beach), one can also easily modify
this feature by locating a cluster in a region comparable to its size.\medskip

In future projects, we plan to introduce
other more sophisticated models, 
also taking into account
stochastic oscillations and optimization methods, 
and, on the long range,
to use these models
in a detailed experimental confrontation taking advantage of
the automated monitoring systems under development in Phillip Island.\medskip

The model that we propose here is also flexible enough
to allow quantitative modifications of all the parameters involved.
This is quite important, since these parameters may vary due
to different conditions of the environment. For instance,
the eye-sight of the penguins can be reduced by the fog (see~\cite{CH2}),
and by the effect of moonlight and artificial light (see~\cite{CH5}).

Similarly, the number of penguins in each group
and the velocity of the herd may vary due to 
structural changes of the beach:
roughly speaking,
from the empirical data, penguins typically gather into groups of~$ 5$--$10$
individuals (but we have also observed
much larger groups forming on the beach)
within 40 second intervals, see~\cite{CH1},
but the way these groups are built varies year by year
and, for instance, the number of individuals which
always gather into the same group changes year by year
in strong dependence with the breeding success
of the season, see again~\cite{CH1}. Also,
tidal phenomena may change the number of little
penguins in each group and the velocity of the group,
since the change of the beach width alters the perception
of the risk of the penguins. For instance, a low tide produces a larger
beach, with higher potential risk of predators, thus making
the penguins gather in groups of larger size, see~\cite{CH6}.
\medskip

{F}rom the mathematical viewpoint, we remark that~\eqref{EQ}
does not follow into the classical framework of ordinary differential
equations, since the right hand side of the equation is not
Lipschitz continuous (and, in fact, it is not even continuous).
This mathematical complication is indeed
the counterpart of the real motion of the little penguins 
in the parade, which have the tendency to change their speed
rather abruptly to maintain contact with the other elements of 
the herd. That is, on view, it does not
seem unreasonable to model, as a simplification, 
the speed of the penguin as
a discontinuous function, to take into account the sudden
modifications of the waddling
according to the position of the other penguins,
with the conclusive aim of gathering together a sufficient
number of penguins in a group which eventually will
march concurrently in the direction of their 
burrows.

\subsection{Detailed organization of the paper}

The mathematical treatment of equation~\eqref{EQ} that
we provide in this paper is the following.

\begin{itemize}
\item In Section~\ref{EXUT},
we provide a notion of solution for which~\eqref{EQ} is uniquely
solvable in the appropriate setting. This notion of solution will
be obtained by a ``stop-and-go'' procedure, which
is compatible with the idea that when two (or more)
groups of penguins meet, they form a new, bigger group
which will move coherently in the sequel of the march.
\item In Section~\ref{HOME}, we discuss a couple of
concrete examples in which the penguins are able to safely return home: namely, 
we show that there are
``nice'' conditions in which the strategy of the penguins
allows a successful homecoming.
\item In Section~\ref{VIDEO}, we present a series
of numerical simulations
to compare our mathematical model with the real-world experience.
\end{itemize}

\section{Existence and uniqueness theory for equation~\eqref{EQ}}\label{EXUT}

We stress that equation~\eqref{EQ}
does not lie within the setting of ordinary differential
equations, since the right hand side is not Lipschitz
continuous (due to the discontinuity of the functions~$w$
and~$m_i$, and in fact the right hand side also involves
functions with domain varying in time).
As far as we know, the weak formulations of 
ordinary differential
equations as the ones of~\cite{MR1022305} do not take
into consideration the setting of equation~\eqref{EQ},
so we briefly discuss here a direct
approach to the existence and uniqueness theory for
such equation. To this end, and to clarify our
direct approach, we present two illustrative examples (see e.g.~\cite{MR1028776}).

\begin{example}{\rm Setting~$x:[0,+\infty)\to\R$, the ordinary differential
equation
\begin{equation}\label{L1} \dot x(t) =\left\{
\begin{matrix}
-1 & {\mbox{ if }} x(t)\ge0,\\
1 & {\mbox{ if }} x(t)<0
\end{matrix}
\right. \end{equation}
is not well posed. Indeed, taking an initial datum~$x(0)<0$,
it will evolve with the formula~$x(t)=t+x(0)$ for any~$t\in[0,-x(0)]$
till it hits the zero value. At that point, equation~\eqref{L1}
would prescribe a negative velocity, which becomes contradictory
with the positive velocity prescribed to the negative coordinates.
}\end{example}

\begin{example}\label{EXMPL2}{\rm The ordinary differential
equation
\begin{equation}\label{L2} \dot x(t) =\left\{
\begin{matrix}
-1 & {\mbox{ if }} x(t)>0,\\
0 & {\mbox{ if }} x(t)=0,\\
1 & {\mbox{ if }} x(t)<0
\end{matrix}
\right. \end{equation}
is similar to the one in~\eqref{L1},
in the sense that it does not fit into the standard
theory of ordinary differential equations, due to the lack of
continuity of the right hand side. But, differently from
the one in~\eqref{L1}, it can be set into an existence and
uniqueness theory by a simple ``reset'' algorithm.

Namely, taking an initial datum~$x(0)<0$,
the solution
evolves with the formula~$x(t)=t+x(0)$ for any~$t\in[0,-x(0)]$
till it hits the zero value. At that point, equation~\eqref{L2}
would prescribe a zero velocity, thus a natural way to continue the
solution is to take~$x(t)=0$ for any~$t\in[-x(0),+\infty)$
(similarly,
in the case of positive 
initial datum~$x(0)>0$, a natural way to continue the solution
is~$x(t)=-t+x(0)$ for any~$t\in[0,x(0)]$
and~$x(t)=0$ for any~$t\in[x(0),+\infty)$).
The basic idea for this continuation method
is to flow the equation according
to the standard Cauchy theory of
ordinary differential equations
for as long as possible, and then, when the classical theory
breaks, ``reset'' the equation with respect of the datum
at the break time (this method is not universal
and indeed it does not work for~\eqref{L1},
but it produces a natural global solution for~\eqref{L2}).
}\end{example}

In the light of Example~\ref{EXMPL2},
we now present a framework in which equation~\eqref{EQ}
possesses a unique solution (in a suitable ``reset'' setting).
To this aim, we first notice that 
the initial number of groups of penguins is fixed to be equal to~$n_1$
and each group is given by a fixed number of little penguins packed
together (that is,
the number of little penguins in the $i$th initial group being equal to~$\bar w_{i,1}$
and~$i$ ranges from~$1$ to~$n_1$). So, we set~$\bar w_1:=(\bar w_{1,1},\dots,\bar w_{n_1,1})$
and~$\bar\wgotica_{i,1}=\wgotica(\bar w_{i,1})$, where~$\wgotica$ was defined in~\eqref{OMEGA}.
For any~$\rho=(\rho_1,\dots,\rho_{n_1})\in\R^{n_1}$, let also
\begin{equation}\label{PANNUOVA}  \pan_{i,1}(\rho)
:=
\max\Big\{ \bar\wgotica_{i,1},\;
\max_{ {j\in\{1,\dots,n_1\}}\atop{j\ne i}}
\varphi\big(|\rho_i-\rho_j|\big)
\Big\}. \end{equation} 
The reader may compare this definition with the one in~\eqref{PAN}.
For any~$i\in\{1,\dots,n_1\}$ we also
set
$$\bar\mu_{i,1}:=\mu(\bar w_{i,1}),$$ where~$\mu$ is the function defined in~\eqref{MU},
and, for any~$\rho=(\rho_1,\dots,\rho_{n_1})\in\R^{n_1}$,
\begin{equation*} 
\bar m_{i,1}(\rho):=
\sum_{j\in\{1,\dots,n_1\}} {\rm sign}\,(\rho_j-\rho_i)\;\bar w_{j,1}\;
\sgotica(|\rho_i-\rho_j|)
.\end{equation*}
This definition has to be compared with~\eqref{emme i}. Recalling~\eqref{INI P}
we also set
$$ {\mathcal{D}}_1:=\{\rho=(\rho_1,\dots,\rho_{n_1})\in\R^{n_1} {\mbox{ s.t. }}
\rho_1<\dots<\rho_{n_1}\}.$$
We remark that if~$\rho\in{\mathcal{D}}_1$ then
\begin{equation*} 
\bar m_{i,1}(\rho)=
\sum_{j\in\{i+1,\dots,n_1\}} \bar w_{j,1}\;
\sgotica(|\rho_i-\rho_j|)
-\sum_{j\in\{1,\dots,i-1\}} \bar w_{j,1}\;
\sgotica(|\rho_i-\rho_j|)
\end{equation*}
and therefore
\begin{equation}\label{VELOCITY2}
{\mbox{$\bar m_{i,1}(\rho)$ is bounded
and Lipschitz for any~$\rho\in{\mathcal{D}}_1$.}}\end{equation}
Then, we set
\begin{equation*}
\vel_{i,1}(\rho):=(1-\bar\mu_{i,1})
\,\bar m_{i,1}(\rho)
+v\bar\mu_{i,1}.\end{equation*}
This definition has to be compared with the one in~\eqref{VELOCITY}.
Notice that, in view of~\eqref{VELOCITY2}, we have that
\begin{equation}\label{VELOCITY3}
{\mbox{$\vel_{i,1}(\rho)$ is bounded and Lipschitz
for any~$\rho\in{\mathcal{D}}_1$.}}\end{equation}
So, we set
$$ G_{i,1}(\rho,t):=\pan_{i,1}(\rho)\,\big(\eps+\vel_{i,1}(\rho)\big)+
f(\rho_i,t).$$
{F}rom~\eqref{PANNUOVA} and~\eqref{VELOCITY3}, we have that~$G_{i,1}$
is bounded and Lipschitz in~${\mathcal{D}}_1\times[0,+\infty)$. Consequently, from
the global existence and uniqueness of solutions of ordinary differential equations,
we have that there exist~$t_1\in(0,+\infty]$ and a solution~$p^{(1)}(t)=(p^{(1)}_1(t),\dots,p^{(1)}_{n_1}(t))\in{\mathcal{D}}_1$
of the Cauchy problem
\begin{align*}
\begin{cases}
\,\dot p^{(1)}_i (t)= G_{i,1}\big( p^{(1)}(t),\,t\big) \qquad {\mbox{ for }}t\in(0,t_1), \\
\,p^{(1)}(0) \qquad {\mbox{ given in }}{\mathcal{D}}_1 \\
\end{cases}
\end{align*}
and
\begin{equation}\label{STOP}
p^{(1)}(t_1)\in\partial {\mathcal{D}}_1,\end{equation} see e.g. 
Theorem 1.4.1 in~\cite{MR3244289}.

The solution of~\eqref{EQ} will be taken to be~$p^{(1)}$ in~$[0,t_1)$,
that is, we set~$p(t):=p^{(1)}(t)$ for any~$t\in[0,t_1)$.
We also set that~$n(t):=n_1$ and~$w(t):=(\bar w_{1,1},\dots,\bar w_{n_1,1})$.
With this setting, we have that~$p$ is a solution of equation~\eqref{EQ}
in the time range~$t\in(0,t_1)$
with prescribed initial datum~$p(0)$. Condition~\eqref{STOP} allows us to perform
our ``stop-and-go'' reset procedure as follows: we denote by~$n_2$
the number of distinct points in the set~$\{p^{(1)}_1(t_1),\dots,p^{(1)}_{n_1}(t_1) \}$.
Notice that~\eqref{STOP} says that if~$t_1$ is finite then~$n_2\le n_1-1$
(namely, at least two penguins have reached the same position). In this way,
the set of points~$\{p^{(1)}_1(t_1),\dots,p^{(1)}_{n_1}(t_1) \}$
can be identified by the set of~$n_2$ distinct points, that we denote by~$\{p^{(2)}_1(t_1),\dots,p^{(2)}_{n_2}(t_1) \}$, with the convention that
$$ p^{(2)}_1(t_1)<\dots<p^{(2)}_{n_2}(t_1).$$
For any~$i\in\{1,\dots,n_2\}$, we also set
$$ \bar w_{i,2} := \sum_{{j\in\{1,\dots,n_1\}}\atop{
p^{(1)}_j(t_1)=p^{(2)}_i(t_1)
}} \bar w_{j,1}.$$
This says that the new group of penguins indexed by~$i$
contains all the penguins that have reached that position at time~$t_1$.

Thus, having the ``new number of groups'', that is~$n_2$,
the ``new number of little penguins in each group'', that is~$\bar w_2=(\bar w_{1,2},\dots,\bar w_{n_2,2})$,
and the ``new initial datum'', that is~$p^{(2)}(t_1)=\big(
p^{(2)}_1(t_1),\dots,p^{(2)}_{n_2}(t_1)\big)$, we can solve a new differential equation
with these new parameters, exactly in the same way as before, and keep iterating this process.

Indeed, recursively, we suppose that we have 
found~$t_1<t_2<\dots<t_k$,
$p^{(1)}:[0,t_1]\to\R^{n_1}$, $\dots$, $p^{(k)}:[0,t_k]\to\R^{n_k}$
and~$\bar w_1\in\N_0^{n_1}$, $\dots$,
$\bar w_k\in\N_0^{n_k}$ such that, setting
$$ p(t):=p^{(j)}(t)\in
{\mathcal{D}}_j, \qquad 
n(t):=n_j\qquad{\mbox{and}}\qquad
w(t):=\bar w_{j}\qquad{\mbox{for $t\in [t_{j-1},t_j)$ and~$j\in\{1,\dots,k\}$,}} $$
one has that~$p$ solves~\eqref{EQ} in each interval~$(t_{j-1},t_j)$
for~$j\in\{1,\dots,k\}$, with the ``stop condition''
$$ p^{(j)}(t_j)\in\partial {\mathcal{D}}_j,$$
where
$$ {\mathcal{D}}_j:=\{\rho=(\rho_1,\dots,\rho_{n_j})\in\R^{n_j} {\mbox{ s.t. }}
\rho_1<\dots<\rho_{n_j}\}.$$
Then, since~$
p^{(k)}(t_k)\in\partial {\mathcal{D}}_k$,
if~$t_k$ is finite, we find~$n_{k+1}\le n_k-1$
such that
the set of points~$\{p^{(k)}_1(t_k),\dots,p^{(k)}_{n_k}(t_k) \}$
coincides
with
a set of~$n_{k+1}$ distinct points, that we denote by~$\{p^{(k+1)}_1(t_k),\dots,p^{(k+1)}_{n_k}(t_k) \}$, 
with the convention that
$$ p^{(k+1)}_1(t_k)<\dots<p^{(k+1)}_{n_k}(t_k).$$
For any~$i\in\{1,\dots,n_{k+1}\}$, we set\footnote{It is useful
to observe that, in light of~\eqref{numero},
$$ \sum_{i\in \{1,\dots,n_{k+1}\} } \bar w_{i,k+1}=
\sum_{i\in \{1,\dots,n_{k}\} } \bar w_{i,k},$$ \label{OGG}
which says that the total number of little penguins remains always the same
(more precisely, the sum of all the little penguins in all groups
is constant in time).}
\begin{equation}\label{numero} \bar w_{i,k+1} := \sum_{{j\in\{1,\dots,n_k\}}\atop{
p^{(k)}_j(t_k)=p^{(k+1)}_i(t_k)
}} \bar w_{j,k}.\end{equation}
Let also~$ \bar\wgotica_{i,k+1}=\wgotica(\bar w_{i,k+1})$.
Then, for any~$i\in\{1,\dots,n_{k+1}\}$ and any~$\rho=
(\rho_1,\dots,\rho_{n_{k+1}})\in\R^{n_{k+1}}$, we set
\begin{equation*}  \pan_{i,k+1}(\rho)
:=
\max\Big\{ \bar\wgotica_{i,k+1},\;
\max_{ {j\in\{1,\dots,n_{k+1}\}}\atop{j\ne i}}
\varphi\big(|\rho_i-\rho_j|\big)
\Big\}. \end{equation*} 
For any~$i\in\{1,\dots,n_{k+1}\}$ we also
define
$$\bar\mu_{i,k+1}:=\mu(\bar w_{i,k+1}),$$ where~$\mu$ is the function defined in~\eqref{MU}
and, for any~$\rho\in\R^{n_{k+1}}$,
\begin{equation*} 
\bar m_{i,k+1}(\rho):=
\sum_{j\in\{1,\dots,n_{k+1}\}} {\rm sign}\,(\rho_j-\rho_i)\;\bar w_{j,k+1}\;
\sgotica(|\rho_i-\rho_j|)
.\end{equation*}
We notice that~$
\bar m_{i,k+1}(\rho)$ is bounded and Lipschitz for any~$\rho\in
{\mathcal{D}}_{k+1}:=\{\rho=(\rho_1,\dots,\rho_{n_{k+1}})\in\R^{n_{k+1}} {\mbox{ s.t. }}
\rho_1<\dots<\rho_{n_{k+1}}\}$.

We also define
\begin{equation*}
\vel_{i,k+1}(\rho):=(1-\bar\mu_{i,k+1})
\,\bar m_{i,k+1}(\rho)
+v\bar\mu_{i,k+1}\end{equation*}
and
$$ G_{i,k+1}(\rho,t):=\pan_{i,k+1}(\rho)\,\big(\eps+\vel_{i,k+1}(\rho)\big)+
f(\rho_i,t).$$
In this way,
we have that~$G_{i,k+1}$
is bounded and Lipschitz in~${\mathcal{D}}_{k+1}\times[0,+\infty)$
and so we find the next solution~$p^{(k+1)}(t)=(p^{(k+1)}_1(t),\dots,p^{(k+1)}_{n_{k+1}}(t))\in {\mathcal{D}}_{k+1}$
in the interval~$(t_k,t_{k+1})$, with~$p^{(k+1)}(t_{k+1})\in\partial
{\mathcal{D}}_{k+1}$, by solving the ordinary differential equation
$$ \dot p^{(k+1)}_i(t)= G_{i,k+1}\big(p^{(k+1)}(t),t\big). $$
This completes the iteration argument
and provides the desired notion of solution for equation~\eqref{EQ}.

\section{Examples of safe return home}\label{HOME}

Here, we provide some sufficient conditions for
the penguins to reach their home, located at the point~$H$
(let us mention that,
in the parade that we saw live,
one little penguin remained stuck
into panic and did not manage to return home -- so,
giving a mathematical treatment of the case in which
the strategy of the penguins turns out to be successful
somehow reassured
us on the fate of the species).

To give a mathematical framework of the notion of homecoming,
we introduce the function
$$ [0,+\infty)\ni t\mapsto {\mathcal{N}}(t):=
\sum_{{j\in\{1,\dots,n(t)\}}\atop{p_j(t)=H}} w_j(t).$$
In the setting of footnote~\ref{STAY},
the function~${\mathcal{N}}(t)$ represents the number of penguins
that have safely returned home at
time~$t$.

For counting reasons, we also point out that
the total number of penguins is constant and given by
$$ {\mathcal{M}}:=\sum_{ {j\in\{1,\dots,n(0)\}} } w_j(0)
=\sum_{ {j\in\{1,\dots,n(t)\}} } w_j(t),$$
for any~$t\ge0$ (recall footnote~\ref{OGG}).

The first result that we present says that if at some time
the group of penguins that stay
further behind
gathers into a group of at least two elements, then
all the penguins will manage to eventually return home.
The mathematical setting goes as follows:

\begin{theorem}\label{THM:1}
Let~$t_o\ge0$ and assume that
\begin{equation}\label{jk:11}
\eps+\inf_{(r,t)\in \R\times[t_o,+\infty)}
f(r,t)\ge \iota 
\end{equation}
for some~$\iota>0$, and
\begin{equation}\label{jk:12}
w_1(t_o)\ge2.
\end{equation}
Then, there exists~$T\in \left[ t_o, \,t_o+\frac{H-p_1(t_o)}{\iota}\right]$
such that
$$ {\mathcal{N}}(T)={\mathcal{M}}.$$
\end{theorem}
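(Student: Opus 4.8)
The plan is to follow the rearmost group of penguins and to show that it advances at speed at least~$\iota$, so that it reaches the burrow location~$H$ within the asserted time horizon; being the rearmost, once it sits at~$H$ no group can be left behind. First I would record that the stop-and-go construction of Section~\ref{EXUT} keeps the positions ordered as~$p_1<\dots<p_{n(t)}$, so the rearmost group is always the one of index~$1$, and that this label is stable: two groups cannot cross without first occupying the same position, at which instant they merge. Starting from~\eqref{jk:12}, i.e.\ $w_1(t_o)\ge2$, and using that a merge only pools penguins together (see~\eqref{numero}), I would argue by induction over the reset times~$t_j$ that the index-$1$ group carries at least two little penguins at all times~$t\ge t_o$. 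By~\eqref{OMEGA} this gives~$\wgotica(w_1)=1$, whence~$\pan_1\equiv1$ by~\eqref{PAN}: the rearmost group never enters panic.

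Next I would bound the intentional velocity of this group from below. Since every other group sits strictly ahead of it, each nonzero term in~\eqref{emme i} has~${\rm sign}\,(p_j-p_1)=+1$, so~$m_1\ge0$; as~$v>0$, formula~\eqref{VELOCITY} then gives~$\vel_1\ge0$ whether or not~$\mu(w_1)$ vanishes. Substituting~$\pan_1=1$ and~$\vel_1\ge0$ into~\eqref{EQ} and invoking hypothesis~\eqref{jk:11}, I would obtain, on every interval~$(t_{j-1},t_j)$, the differential inequality~$\dot p_1=\eps+\vel_1+f(p_1,t)\ge \eps+\inf f\ge\iota$.

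I would then set~$q(t):=p_1(t)$, the position of the rearmost group. This function is continuous across each reset (positions do not jump when groups coincide and merge) and satisfies~$\dot q\ge\iota$ on each smooth piece, so~$q(t)\ge p_1(t_o)+\iota\,(t-t_o)$ for as long as the group has not yet arrived. Defining~$T$ as the first time at which~$q(T)=H$, this estimate forces~$T\le t_o+\frac{H-p_1(t_o)}{\iota}$, while trivially~$T\ge t_o$. Finally, by the convention of footnote~\ref{STAY} no coordinate ever exceeds~$H$, so~$q(T)=H$ means every~$p_i(T)$ equals~$H$; hence all~${\mathcal{M}}$ penguins are home and~${\mathcal{N}}(T)={\mathcal{M}}$.

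The main obstacle is the combinatorial bookkeeping of the reset scheme rather than any analytic estimate: one must verify that the property ``the rearmost group contains at least two penguins'' genuinely survives every reset — including merges occurring among the groups that are ahead, and the absorption of groups that have already reached~$H$ — and that~$q$ is truly continuous with the asserted derivative bound across the (at most~$n_1-1$) reset times. Once this is secured, the integration of~$\dot q\ge\iota$ and the resulting time bound are immediate.
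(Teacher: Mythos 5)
Your proposal is correct and follows essentially the same route as the paper's proof: show that the rearmost group keeps at least two members (so $\pan_1\equiv1$ by~\eqref{OMEGA} and~\eqref{PAN}), that its strategic velocity is nonnegative because all other groups lie ahead, deduce $\dot p_1\ge\eps+f\ge\iota$ from~\eqref{jk:11}, and integrate to get $p_j(t)\ge p_1(t)\ge\min\{H,\,p_1(t_o)+\iota\,(t-t_o)\}$. The paper states the monotonicity of $w_1$ and the lower bound $\vel_1\ge0$ more tersely, but the argument is the same.
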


\begin{proof} We observe that~$w_1(t)$ is nondecreasing in~$t$,
thanks to~\eqref{numero}, and therefore~\eqref{jk:12}
implies that~$w_1(t)\ge2$ for any~$t
\ge t_o$. Consequently, from~\eqref{OMEGA},
we obtain that~$ \wgotica(w_1(t))=1$
for any~$t
\ge t_o$. This and~\eqref{PAN} give that~$
\pan_1\big(\rho,w(t);t\big) =1$ for any~$t\ge t_o$ and any~$\rho\in\R^{n(t)}$.
Accordingly, the equation of motions in~\eqref{EQ} 
gives that, for any~$t\ge t_o$,
\begin{equation*}
\dot{p}_1(t) =\eps+\vel_1\big(
p(t),w(t);t\big)+f\big(p_1(t),t\big)\ge \eps+f\big(p_1(t),t\big)\ge
\iota,
\end{equation*}
thanks to~\eqref{jk:11}. That is, for any~$j\in\{1,\dots,n(t)\}$,
$$ p_j(t)\ge p_1(t)\ge \min\{ H, \; p_1(t_o)+\iota \,(t-t_o) \},$$
which gives the desired result.
\end{proof}

A simple variation of Theorem~\ref{THM:1}
says that if, at some time, a group of little penguins
reaches a sufficiently large size, then all the penguins in this group
(as well as the ones ahead) safely reach their home.
The precise statement (whose proof is similar to the one of
Theorem~\ref{THM:1}, up to technical modifications, and is therefore
omitted) goes as follows:

\begin{theorem}
Let~$t_o\ge0$ and assume that
\begin{equation*}
\eps+v+\inf_{(r,t)\in \R\times[t_o,+\infty)}
f(r,t)\ge \iota 
\end{equation*}
for some~$\iota>0$, and
\begin{equation*}
w_{j_o}(t_o)\ge\kappa,
\end{equation*}
for some~$j_o\in\{1,\dots,n(t_o)\}$.

Then, there exists~$T\in \left[ t_o, \,t_o+\frac{H-p_{j_o}(t_o)}{\iota}\right]$
such that
$$ {\mathcal{N}}(T)\ge 
\sum_{ {j\in\{j_o,\dots,n(t_o)\}} } w_j(t_o)
.$$
\end{theorem}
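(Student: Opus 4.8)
The plan is to mirror the proof of Theorem~\ref{THM:1}, adapting it to the hypothesis that some group reaches the cruising size~$\kappa$ rather than that the rearmost group has at least two members. The key observation is that once a group has at least~$\kappa$ little penguins, both the panic function and the strategic velocity simplify in a favorable way, and this favorable configuration is preserved for all later times because groups only merge and never lose members.

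First I would fix the index~$j_o$ with~$w_{j_o}(t_o)\ge\kappa$ and argue that the size of the group occupying the position of this cluster is nondecreasing in~$t$. Indeed, by~\eqref{numero} (and the accompanying observation in footnote~\ref{OGG}), whenever a merge occurs the resulting group inherits the penguins of its constituents, so the number of little penguins located at the $j_o$th trajectory can only increase. Hence~$w_{j_o}(t)\ge\kappa$ for all~$t\ge t_o$. From~\eqref{MU} this gives~$\mu(w_{j_o}(t))=1$, so by~\eqref{VELOCITY} the strategic velocity collapses to~$\vel_{j_o}=v$. Moreover, since~$\kappa\ge2$, we have~$\wgotica(w_{j_o}(t))=1$ by~\eqref{OMEGA}, and therefore~\eqref{PAN} yields~$\pan_{j_o}\big(\rho,w(t);t\big)=1$ for every~$t\ge t_o$ and every admissible~$\rho$.

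Next I would substitute these simplifications into the equation of motion~\eqref{EQ}. For the $j_o$th group one obtains, for all~$t\ge t_o$,
\begin{equation*}
\dot p_{j_o}(t)=\eps+v+f\big(p_{j_o}(t),t\big)\ge\iota,
\end{equation*}
using the hypothesis~$\eps+v+\inf f\ge\iota$. This linear lower bound on the velocity forces~$p_{j_o}(t)\ge\min\{H,\,p_{j_o}(t_o)+\iota\,(t-t_o)\}$, so the $j_o$th group reaches~$H$ by time~$T:=t_o+\frac{H-p_{j_o}(t_o)}{\iota}$ at the latest. Because the ordering~$p_1<\dots<p_{n(t)}$ is preserved on each interval~$\mathcal D_j$, every group ahead of~$j_o$ satisfies~$p_j(t)\ge p_{j_o}(t)$ and hence arrives home no later than the $j_o$th group. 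Counting the penguins that have reached~$H$ with the definition of~$\mathcal N$ then gives~$\mathcal N(T)\ge\sum_{j\in\{j_o,\dots,n(t_o)\}}w_j(t_o)$.

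The only genuinely delicate point is bookkeeping across the reset times: the index~$j_o$ is not intrinsic, since after a merge the labels are reshuffled and the convention~$p_1<\dots<p_{n_j}$ is re-imposed. The clean way to handle this is to track the \emph{position} of the distinguished cluster rather than its numerical index, and to verify that each of the simplifications~$\mu=1$, $\wgotica=1$, $\pan=1$ depends only on the population at that position, which is monotone by~\eqref{numero}. I expect this relabeling argument to be the main obstacle, but it is exactly the technical modification the statement refers to, and once it is set up the velocity estimate and the comparison with the groups ahead proceed verbatim as in Theorem~\ref{THM:1}.
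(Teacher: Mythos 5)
Your proposal is correct and follows exactly the route the paper intends: the authors omit this proof precisely because it is the argument of Theorem~\ref{THM:1} with~$w_{j_o}\ge\kappa$ forcing~$\mu=1$, hence~$\vel_{j_o}=v$ and (since~$\kappa\ge2$)~$\pan_{j_o}=1$, giving~$\dot p_{j_o}\ge\eps+v+f\ge\iota$ and the same comparison for the groups ahead. Your identification of the relabeling of the distinguished cluster across merge times as the ``technical modification'' is also the right reading of the paper's remark.
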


\section{Pictures, videos and numerics}\label{VIDEO}

In this section, we present some simple numerical experiments
to facilitate the intuition at the base of the model presented in~\eqref{EQ}.
These simulations may actually be easily compared with the ``real life'' experience
and indeed they show some of the typical treats of the little penguins parade,
such as the
oscillations and sudden change of direction, the gathering of the penguins into clusters and the possibility that some elements of the herd remain isolated and panic,
either\footnote{The possibility that a penguin remains isolated
also in the sea may actually occur in the real-world experience, as demonstrated
by the last penguin in the herd on the video (courtesy of Phillip Island Nature Parks)
available online at the webpage\\
{\tt 
https://www.ma.utexas.edu/users/enrico/penguins/Penguins2.MOV} }
on the land or in the sea.

In our simulations, for the sake of simplicity, we considered 20 penguins
returning to their burrows from the shore -- some of the penguins may start
their trip from the sea (that occupies the region below level~$0$
in the simulations) in which waves and currents may affect the
movements of the animals. The pictures that we produce have the time variable
on the horizontal axis and the space variable on the vertical axis
(with the burrow of the penguins community set at level~$4$
for definiteness). The pictures are,
somehow, self-explanatory. For instance, in Figure~\ref{MP5},
we present a case in which, fortunately, all the little penguins
manage to safely return home,
after having gathered into groups:
as a matter of fact, in the first of these pictures
all the penguins safely reach home together at the same time
(after having rescued the first penguin, who stayed still for a long
period due to isolation and panic); on the other hand,
the second of these pictures
shows
that a first group of penguins, which was originated
by the animals that were on the land at the initial time,
reaches home
slightly before the second group of penguins, which was originated
by the animals that were in the sea
at the initial time (notice also that
the motion of the penguins in the sea
appears to be affected by waves and currents).

\begin{figure}
    \centering
    \includegraphics[width=10.8cm]{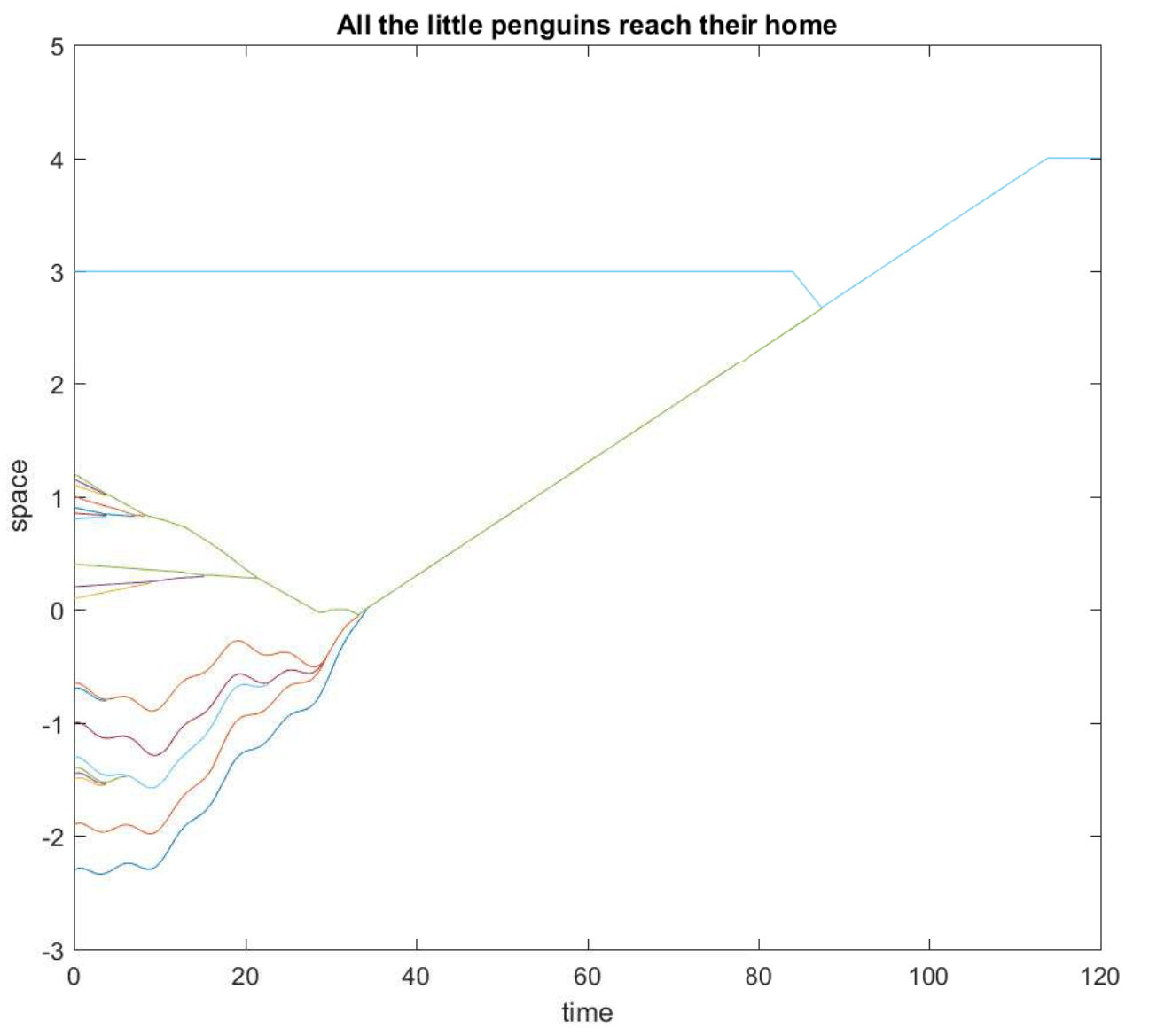}
    \includegraphics[width=10.8cm]{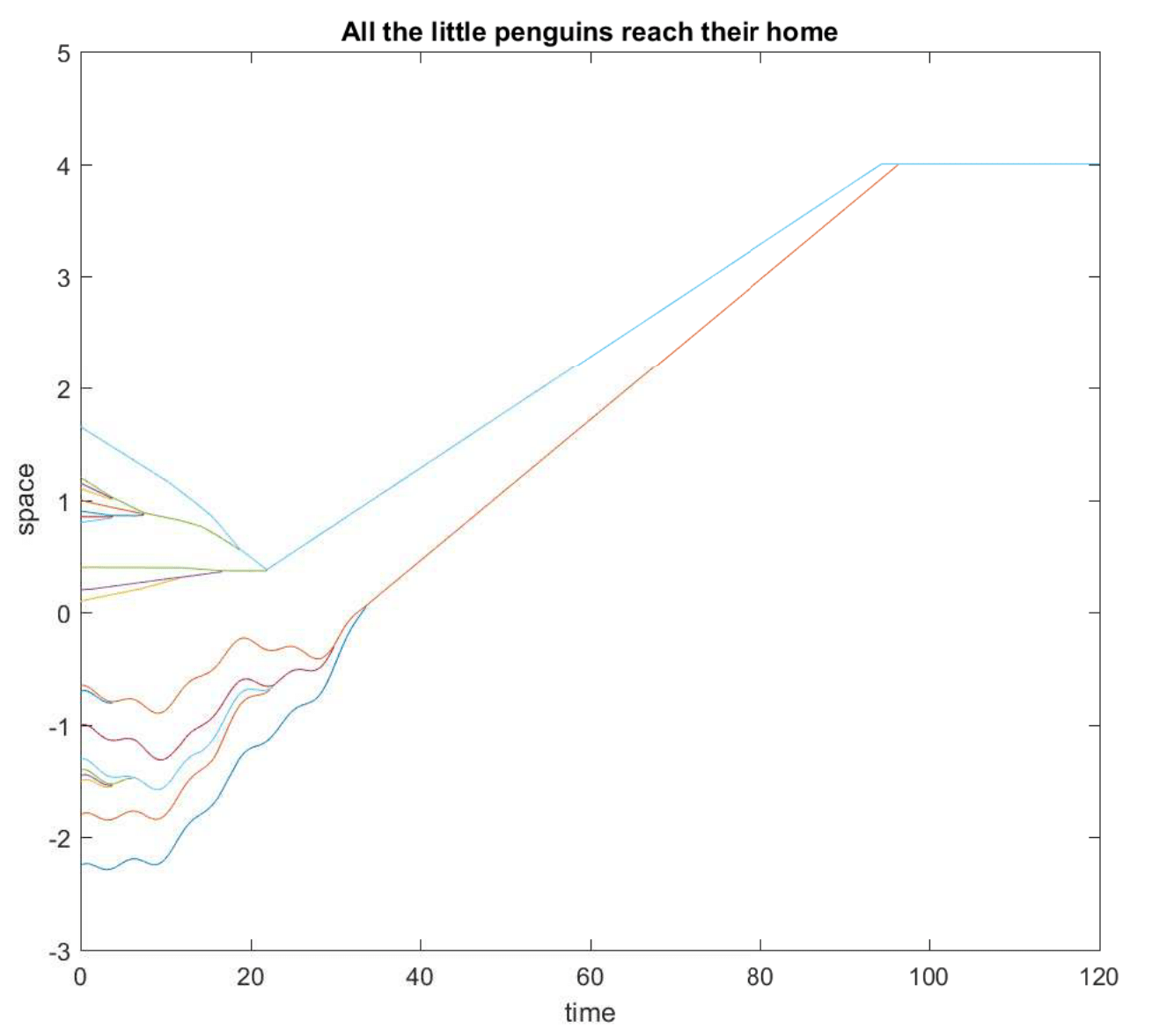}
    \caption{All the little penguins safely return home.}
    \label{MP5}
\end{figure}

We also observe a different scenario depicted in Figure~\ref{MK1}
(with two different functions to represent the currents in the sea):
in this situation, a big group of $18$ penguins
gathers together (collecting also penguins who were initially in the water)
and safely returns home. Two penguins remain isolated in the water,
and they keep slowly moving towards their final destination (that they
eventually reach after a longer time).

\begin{figure}
    \centering
    \includegraphics[width=10.8cm]{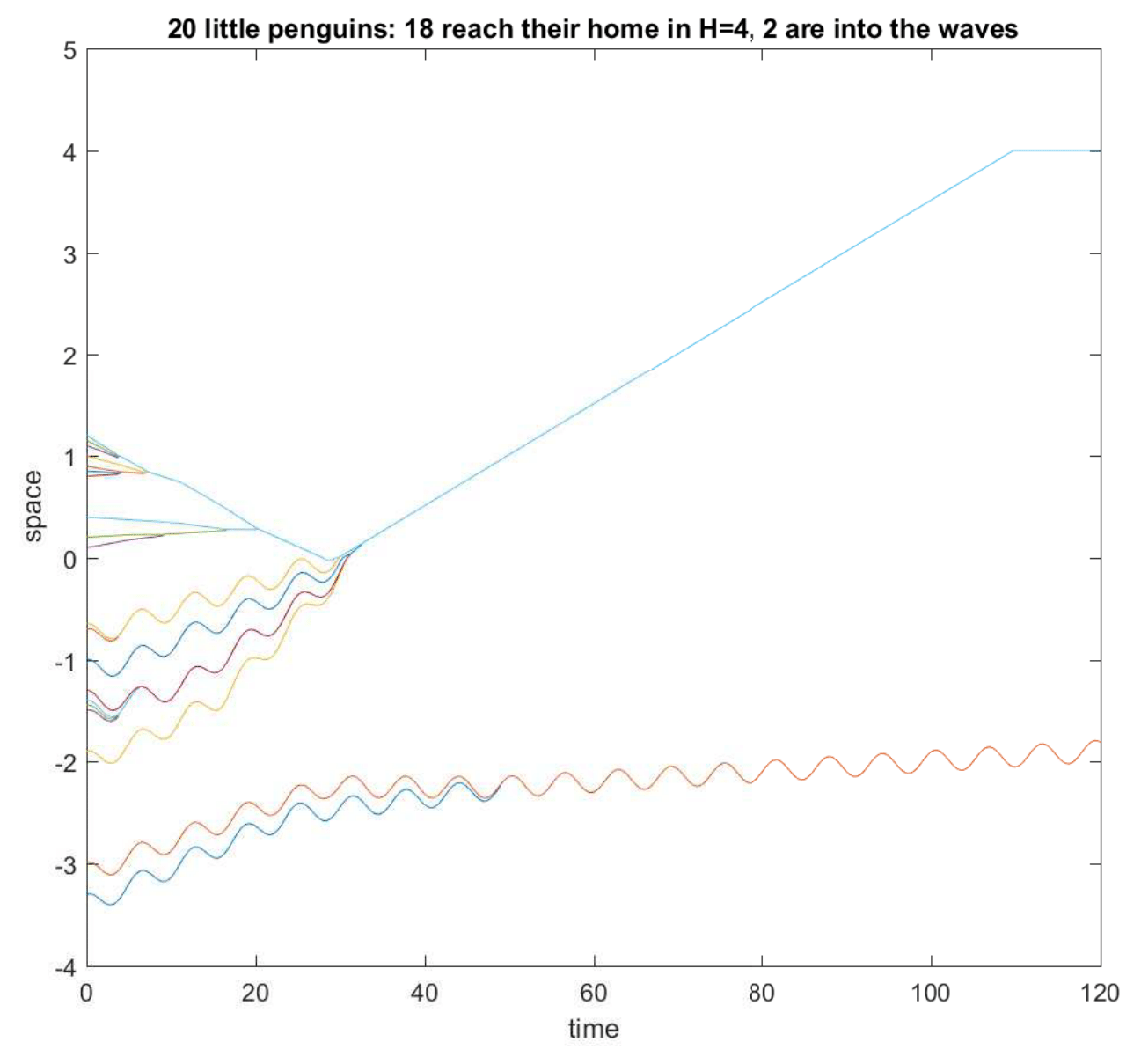}
        \includegraphics[width=10.8cm]{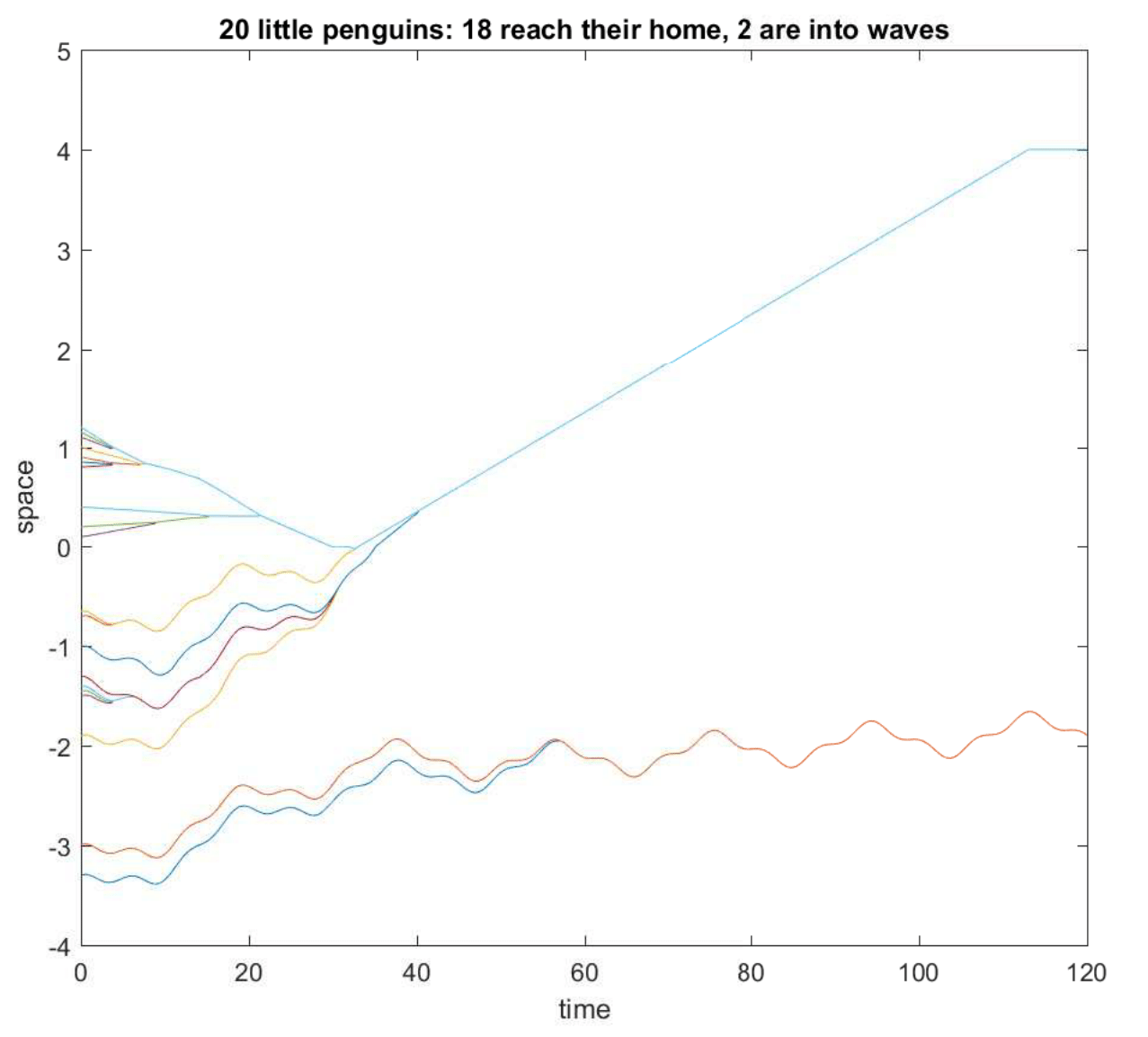}
    \caption{Two penguins are still in the water after a long time.}
    \label{MK1}
\end{figure}

Similarly, in Figure~\ref{MK12}, almost all the penguins gather into a single group
and reach home, while two penguins get together in the sea, they come to
the shore and slowly waddle towards their final destination,
and one single penguin remains isolated and panics in the water, moved by the currents.

\begin{figure}
    \centering
    \includegraphics[width=10.8cm]{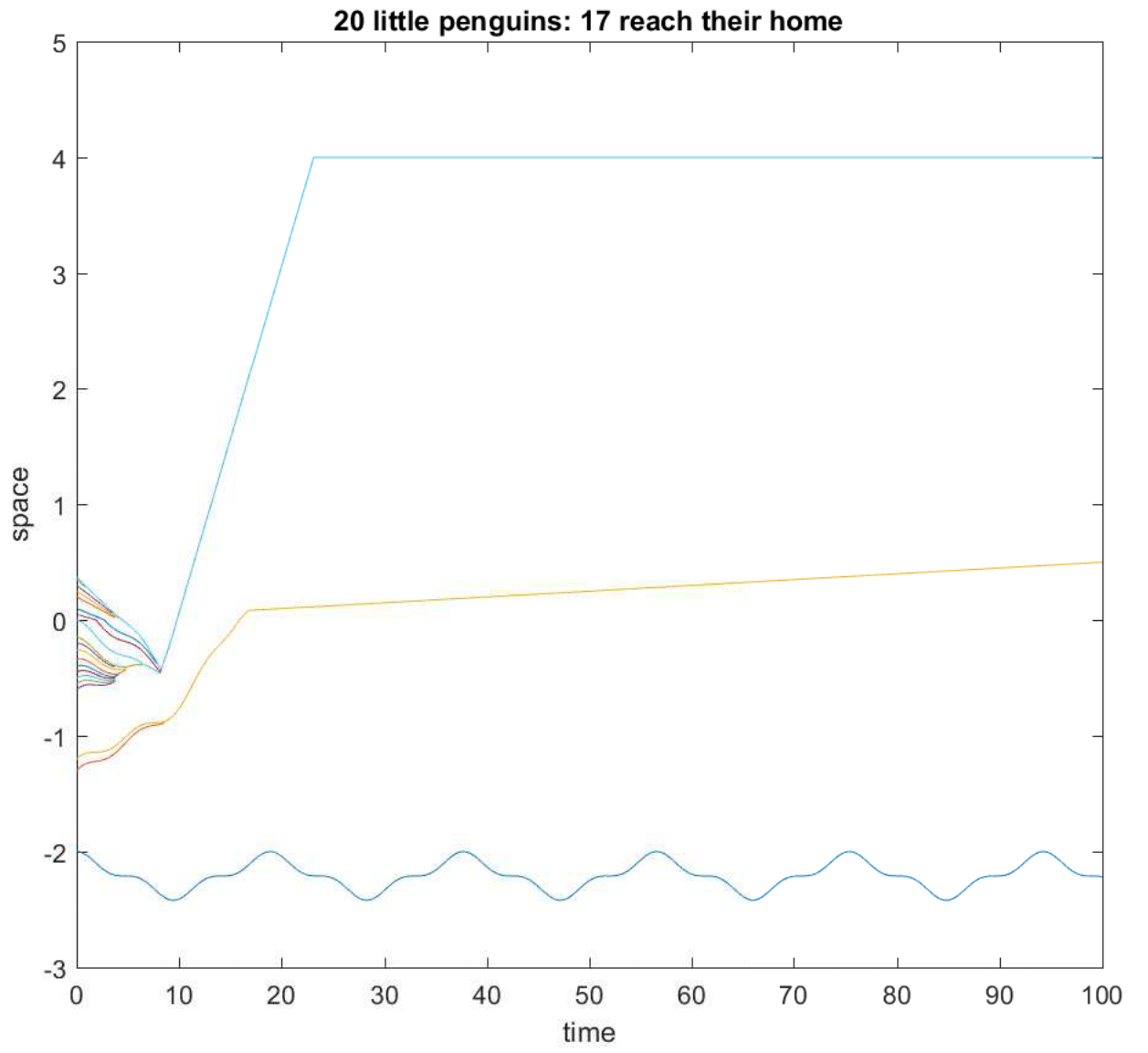}
    \caption{One penguin is stuck in the water.}
    \label{MK12}
\end{figure}

The situation in Figure~\ref{MK4} is slightly different, since
the last penguin at the beginning moves towards the others, but (s)he
does not manage to join the forming group by the time the other penguins
decide to move consistently towards their burrows -- so, unfortunately
this last penguin, in spite of the initial effort, finally remains stuck in the water.
  
\begin{figure}
    \centering
    \includegraphics[width=10.8cm]{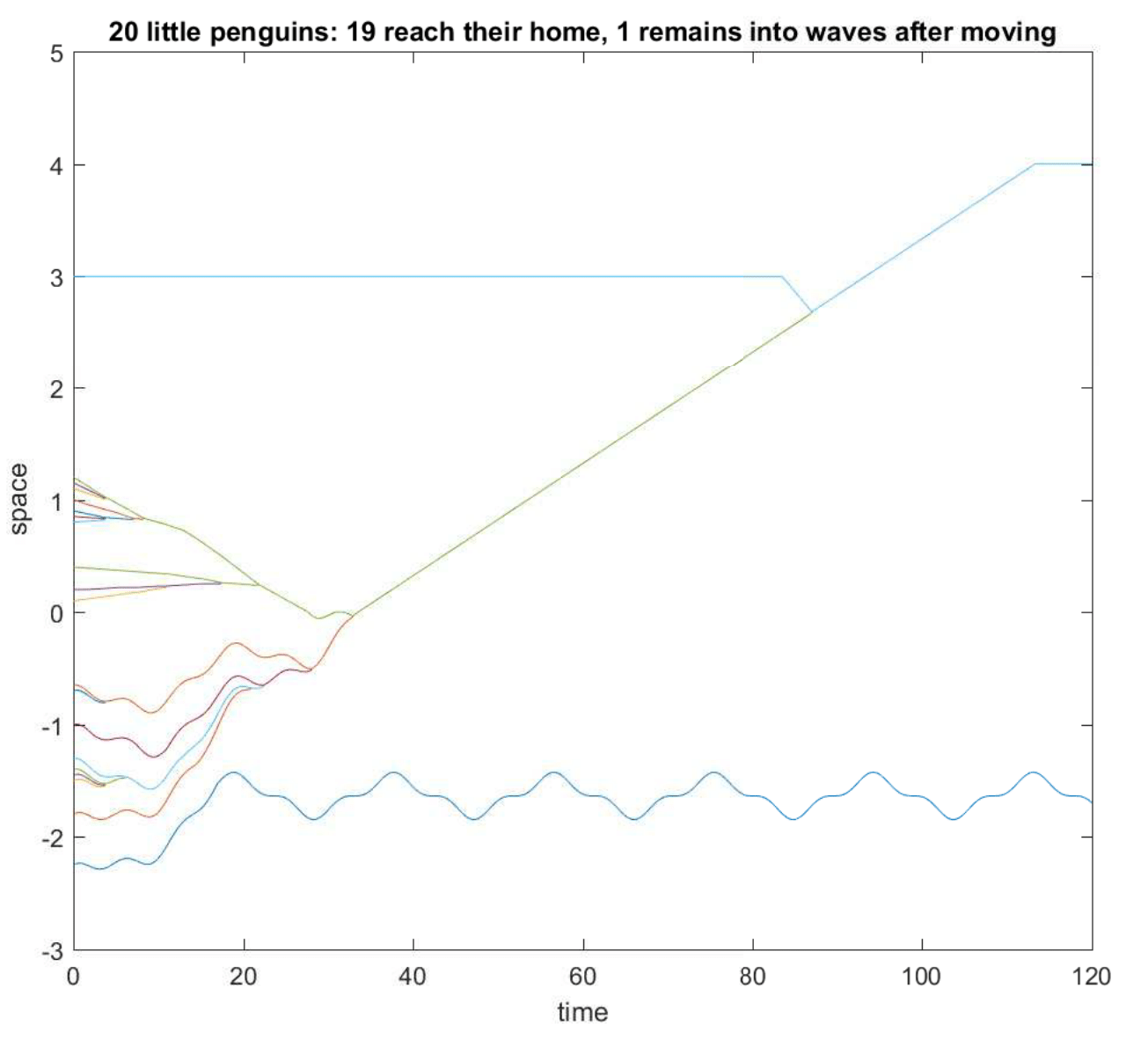}
    \caption{One penguin moves towards the others but remains stuck in the water.}
    \label{MK4}
\end{figure}

In Figure~\ref{MK6}, all the penguins reach their burrows, with the exception of the
last two ones: at the time we end the simulation, one penguin
is stuck on the shore, due to panic, and another one is very slowly approaching
the beach, but (s)he is still in the water (small modifications of the initial
conditions and of the wave function may lead to different future outcomes, namely
either the last penguin is able to reach the shore and happily meet the other mate to waddle
together home, or the strong current may prevent the last penguin to reach the beach,
in which case also the penguin in front would remain stuck).

\begin{figure}
    \centering
    \includegraphics[width=10.8cm]{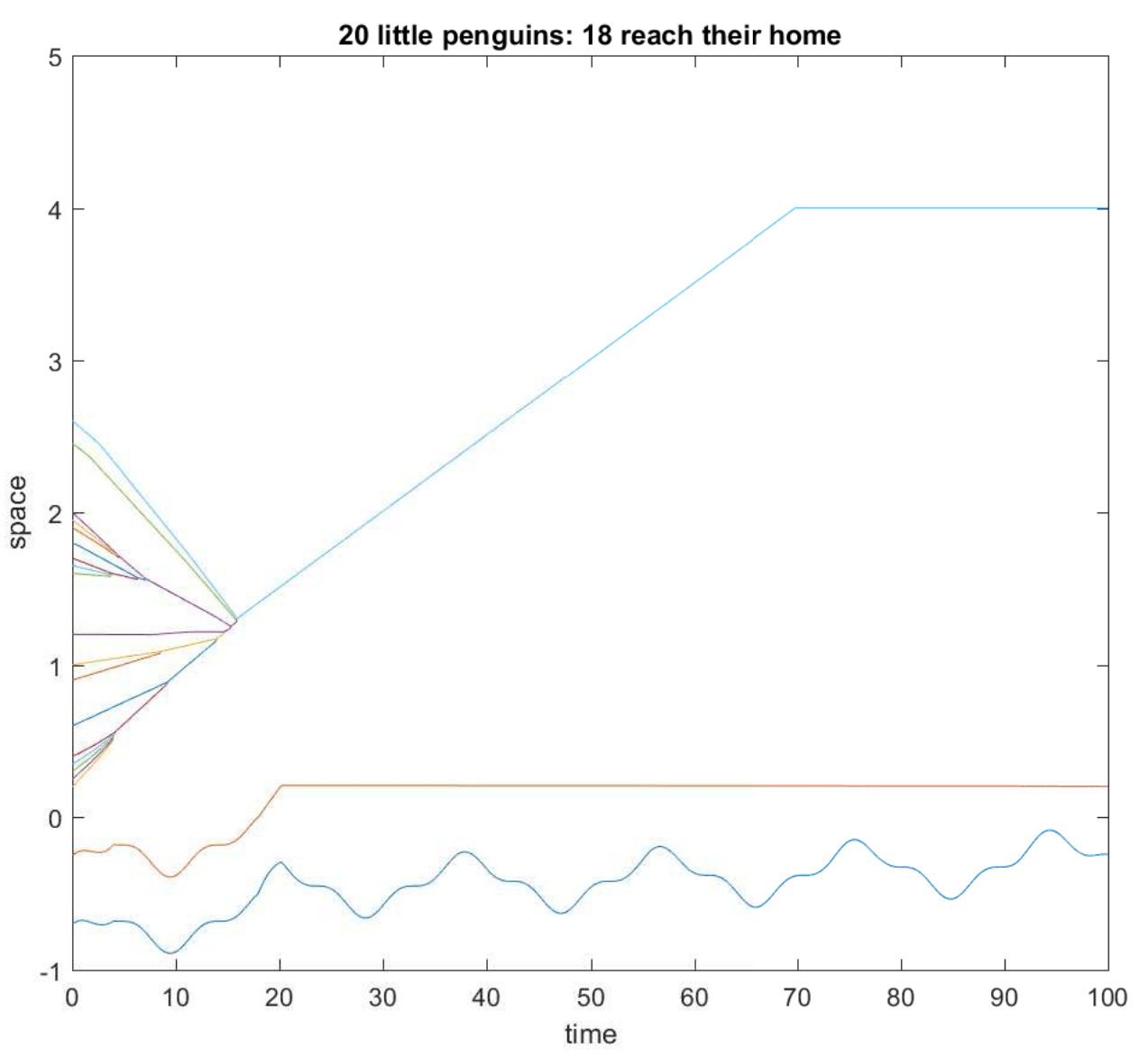}
    \caption{One penguin freezes on the shore, another stays in the water.}
    \label{MK6}
\end{figure}

With simple modifications of the function $f$, one can also consider the
case in which the waves of the sea change with time and their influence
may become more (or less) relevant for the swimming of the little penguins:
as an example of this feature, see Figure \ref{ON}.

\begin{figure}
    \centering
    \includegraphics[width=8.8cm]{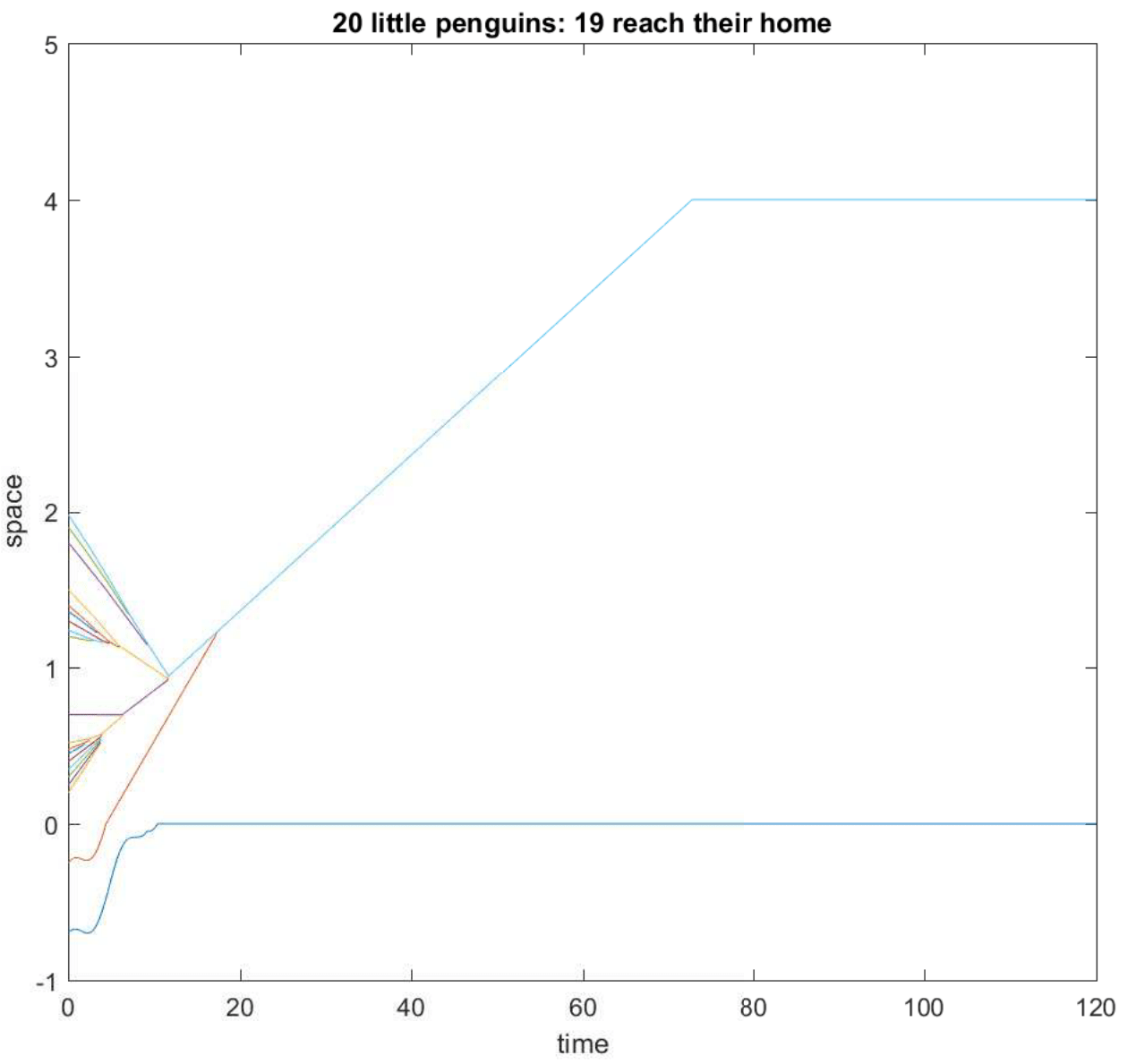}
    \includegraphics[width=8.8cm]{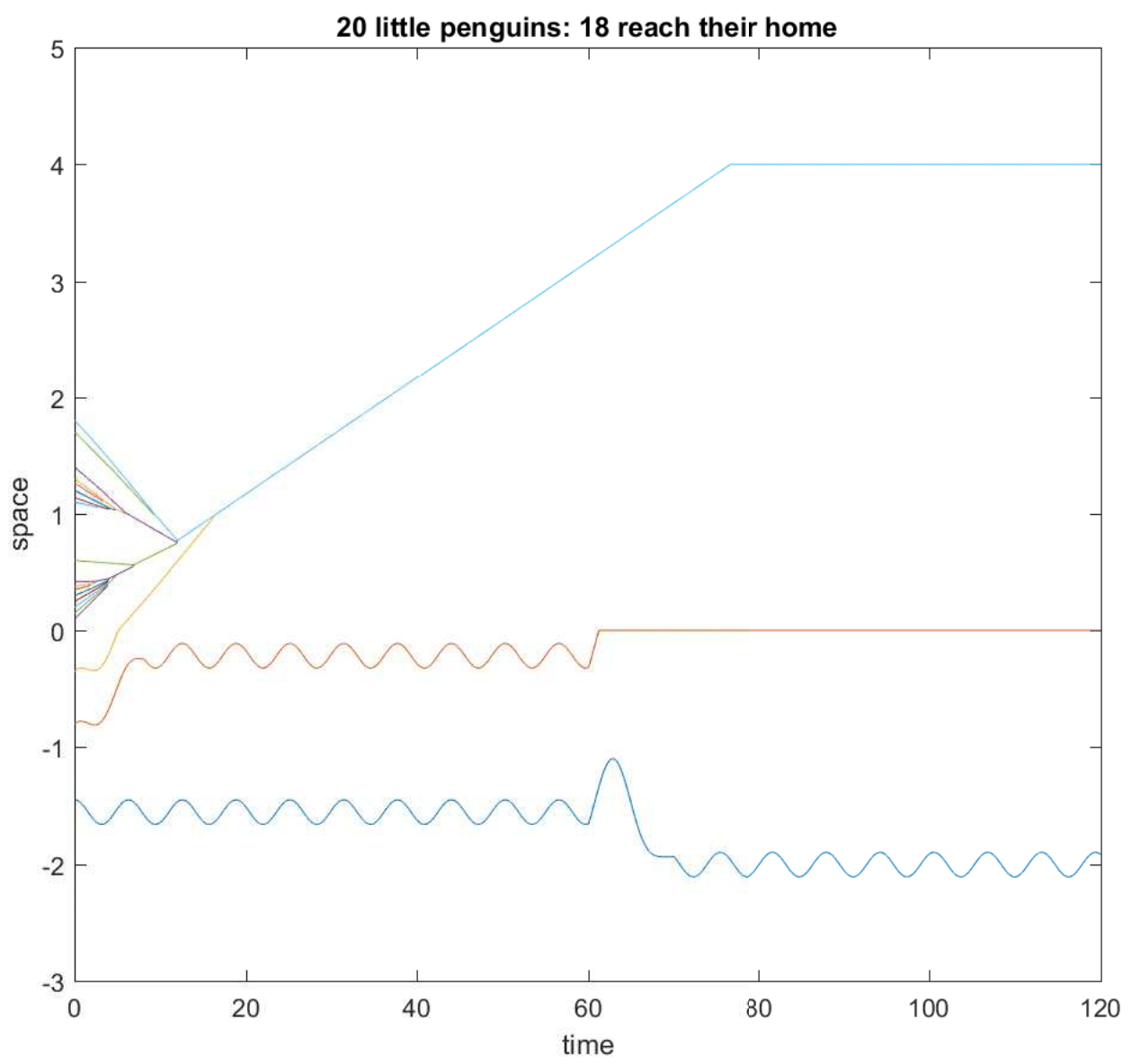}
    \includegraphics[width=8.8cm]{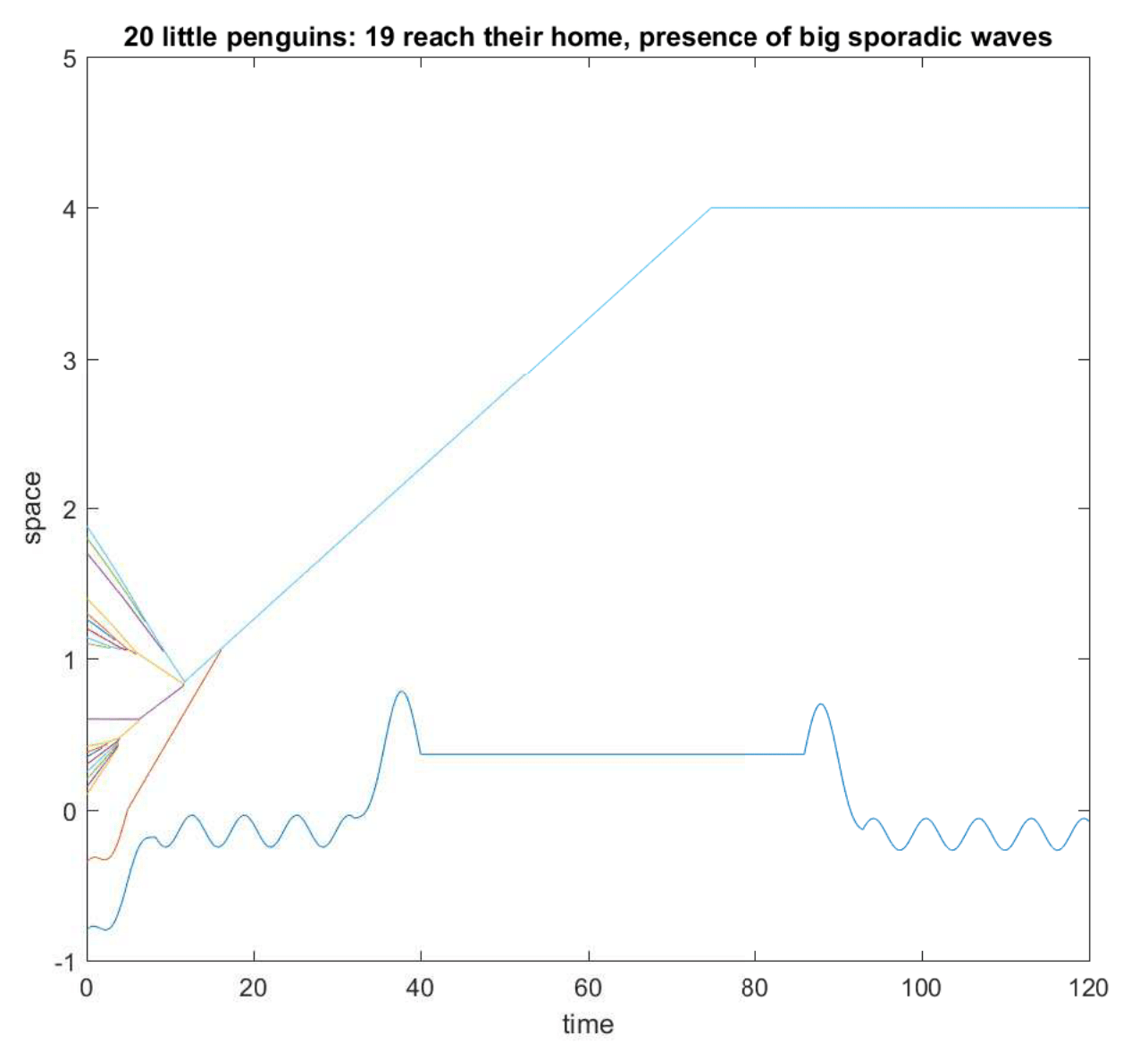}
    \caption{Effect of the waves on the movement of the penguins in the sea.}
    \label{ON}
\end{figure}

Finally, we recall that, in the setting of Section~\ref{INTRO},
once a group of little penguins is created, then it moves consistently
altogether. This is of course a simplifying assumption,
and it might happen in reality that one or a few penguins
leave a large group after its formation
-- perhaps because one penguin is 
slower than the other penguins
of the group, perhaps because (s)he gets distracted
by other events on the beach, or simply because
(s)he feels too exposed being at the side of the group
and may prefer to form a new group in which (s)he
finds a more central and protected position. Though we plan to describe
this case in detail in a forthcoming project (also possibly 
in light of morphological and social considerations
and taking into account a possible randomness
in the system), we stress that natural modifications
can be implemented inside the setting of Section~\ref{INTRO}
to take into account also this feature. For simple and concrete
examples, see Figure~\ref{LEA},
in which several cases are considered
(e.g., one of the little penguins leaving the group gets stuck,
or goes back into the water, or meets another little penguin,
and so on).

\begin{figure}
    \centering
    \includegraphics[width=8.8cm]{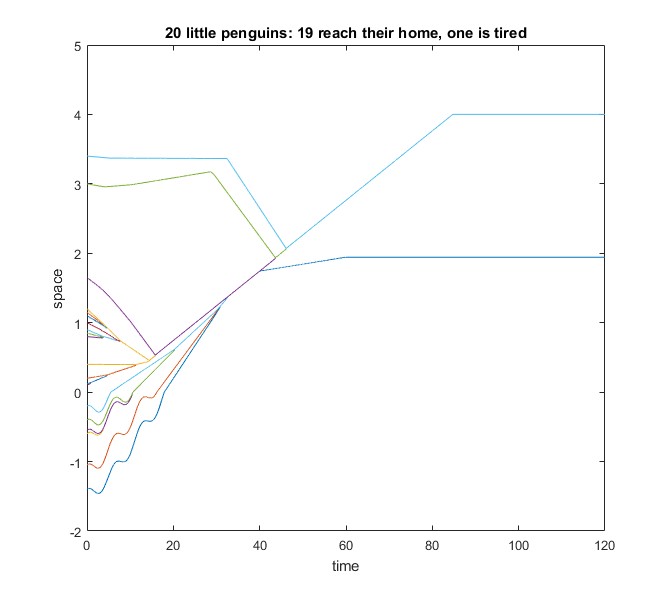}
    \includegraphics[width=8.8cm]{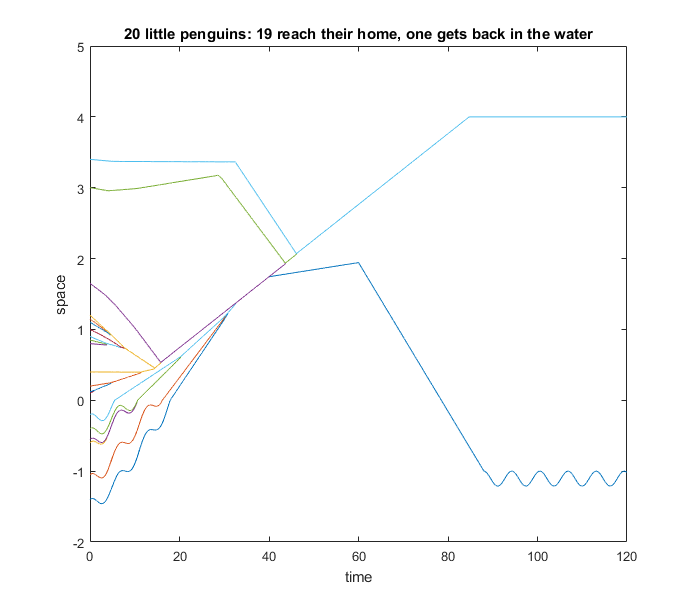}
    \includegraphics[width=8.8cm]{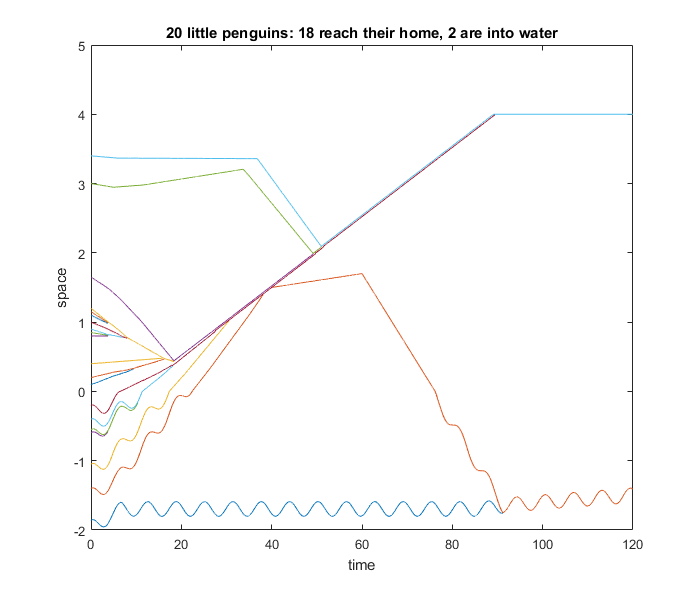}
    \caption{A modification: one little penguin may leave the group.}
    \label{LEA}
\end{figure}
\medskip

The situation in which one little penguin seems to think about
leaving an already formed group can be observed in the video (courtesy of Phillip Island Nature Parks)\\
{\tt
https://www.ma.utexas.edu/users/enrico/penguins/Penguins2.MOV} \\
(see in particular the behavior of the
second penguin from the bottom, i.e.
the last penguin of 
the already formed large cluster).
\medskip

We point out that all these pictures have been easily
obtained by short programs
in MathLab.
As an example,
we posted one of the source codes 
of these programs
on the webpage\\
{\tt
https://www.ma.utexas.edu/users/enrico/penguins/cononda.txt} \\
and all the others are available upon request
(the simplicity of these programs shows that the model in~\eqref{EQ}
is indeed very simple to implement numerically, still producing 
sufficiently ``realistic'' results
in terms of cluster formation and cruising speed of the groups).

Also, these pictures can be easily translated into animations. Simple videos
that we have
obtained by these numerics are available from the webpage\\
{\tt
https://www.ma.utexas.edu/users/enrico/penguins/VID/ }

\section*{Acknowledgements}
This work has been
supported by the ERC grant $\epsilon$ ({\it Elliptic Pde's and
Symmetry of Interfaces and Layers for Odd Nonlinearities})
and the PRIN grant
201274FYK7 (Critical Point Theory
and Perturbative Methods for Nonlinear Differential Equations).
Part of this paper has been written on the occasion of 
a very pleasant visit
of the second and third authors to the University of Melbourne.
We thank Claudia Bucur and Carina Geldhauser for their interesting comments
on a preliminary version of this manuscript.
We would also like to thank Andre Chiaradia for a very instructive
conversation and for sharing the bibliographic information
and some videos of the little penguins parade. Videos credit
to Phillip Island Nature Parks.

\section*{References}
\begin{biblist}[\normalsize]

\bib{CH2}{article}{
author={Chiaradia, A.},
author={McBride, J.},
author={Murray, T.},
author={Dann, P.},
title={Effect of fog on the arrival 
time of little penguins {E}udyptula
minor: a clue for visual
orientation?},
journal={Journal of Ornithology},
year={2007},
volume={148},
number={2},
pages={229--233},
issn={1439-0361},
doi={10.1007/s10336-007-0125-5},
}

\bib{CH1}{article}{
   author={Daniel, T.}, 
   author={Chiaradia, A.}, 
   author={Logan, M.}, 
   author={Quinn, G.},
   author={Reina, R.},
   title={Synchronized group association in little penguins, {E}udyptula {M}inor},
   journal={Animal behaviour},
   volume={74},
   date={2007},
   number={5},
   pages={1241--1248},
   doi={10.1016/j.anbehav.2007.01.029},
}

\bib{MR1022305}{article}{
   author={DiPerna, R. J.},
   author={Lions, P.-L.},
   title={Ordinary differential equations, transport theory and Sobolev
   spaces},
   journal={Invent. Math.},
   volume={98},
   date={1989},
   number={3},
   pages={511--547},
   issn={0020-9910},
   review={\MR{1022305}},
   doi={10.1007/BF01393835},
}

\bib{MR2899094}{article}{
   author={Du, N.},
   author={Fan, J.},
   author={Wu, H.},
   author={Chen, S.},
   author={Liu, Y.},
   title={An improved model of heat transfer through penguin feathers and
   down},
   journal={J. Theoret. Biol.},
   volume={248},
   date={2007},
   number={4},
   pages={727--735},
   issn={0022-5193},
   review={\MR{2899094}},
   doi={10.1016/j.jtbi.2007.06.020},
}

\bib{MR1028776}{book}{
   author={Filippov, A. F.},
   title={Differential equations with discontinuous righthand sides},
   series={Mathematics and its Applications (Soviet Series)},
   volume={18},
   note={Translated from the Russian},
   publisher={Kluwer Academic Publishers Group, Dordrecht},
   date={1988},
   pages={x+304},
   isbn={90-277-2699-X},
   review={\MR{1028776}},
   doi={10.1007/978-94-015-7793-9},
}

\bib{1367-2630-15-12-125022}{article}{
  author={Gerum, R. C.},
  author={Fabry, B.},
    author={Metzner, C.},
    author={Beaulieu, M.}, 
    author={Ancel, A.},
    author={Zitterbart, D. P. },
  title={The origin of traveling waves in an emperor penguin huddle},
  journal={New Journal of Physics},
  volume={15},
  number={12},
  pages={125022},
  url={http://stacks.iop.org/1367-2630/15/i=12/a=125022},
  year={2013},
}

\bib{fish}{book}{
author = {Gheraibia, Y.},
author = {Moussaoui, A.},
title = {Penguins Search Optimization Algorithm (PeSOA)},
  isbn={9783642385773},
  series={in
  Recent Trends in Applied Artificial Intelligence: 26th International Conference 
on Industrial, Engineering and Other Applications of Applied Intelligent Systems, IEA/AIE
2013, Amsterdam, The Netherlands, June 17-21, 2013},
  url={https://books.google.com.au/books?id=JFq5BQAAQBAJ},
  year={2013},
  publisher={Springer Berlin Heidelberg}
}

\bib{MR3244289}{book}{
   author={Kong, Q.},
   title={A short course in ordinary differential equations},
   series={Universitext},
   publisher={Springer, Cham},
   date={2014},
   pages={xii+267},
   isbn={978-3-319-11238-1},
   isbn={978-3-319-11239-8},
   review={\MR{3244289}},
   doi={10.1007/978-3-319-11239-8},
}

\bib{CH6}{article}{
   author={Laaksonen, S. M.},
   author={Chiaradia, A.},
   author={Reina, R. D.},
   title={Behavioural plasticity of a multihabitat animal,
the little penguin, Eudyptula minor, in response to tidal
oscillations on its interhabitat transitions},
   journal={Preprint},
   date={2016},
   }

\bib{CH3}{article}{
author={Macintosh, A. J. J.}, 
author={Pelletier, L.}, 
author={Chiaradia, A.}, 
author={Kato, A.}, 
author={Ropert-Coudert, Y.},
title={Temporal fractals in seabird foraging behaviour: 
diving through the scales of time},
journal={Sci Rep.},
  volume={3},
  number={1884},
  pages={1--10},
year={2013},
doi={10.1038/srep01884},
}

\bib{MR3235841}{article}{
   author={Ragonnet, R.},
   author={Jumentier, R.},
   author={Beckermann, B.},
   title={La marche de l'empereur},
   language={French},
   journal={Matapli}, 
   number={102},
   date={2013},
   pages={71--82},
   issn={0762-5707},
   review={\MR{3235841}},
}

\bib{CH4}{article}{
   author={Reynolds, A. M.}, 
   author={Ropert-Coudert, Y.},
   author={Kato, A.},
   author={Chiaradia, A.},
   author={MacIntosh, A. J. J.},
   title={ A priority-based queuing process explanation 
for scale-free foraging behaviours},
   journal={Animal Behaviour}, 
   number={108},
   date={2015},
   pages={67--71},
   doi={10.1016/j.anbehav.2015.07.022},
}

\bib{CH5}{article}{
   author={Rodr\'{\i}guez, A.}, 
   author={Chiaradia, A.}, 
   author={Wasiak, P.}, 
   author={Renwick, L.}, 
   author={Dann, P.},
title={Waddling on the Dark Side: 
Ambient Light Affects Attendance Behavior of Little Penguins},
   journal={J. Biol. Rhythms},
volume={31},
  number={2},
   date={2016},
   pages={194--204},
 doi={10.1177/0748730415626010},
}

\bib{MR2926716}{article}{
   author={Sidhu, L. A.},
   author={Catchpole, E. A.},
   author={Dann, P.},
   title={Modelling banding effect and tag loss for Little Penguins 
   Eudyptula minor},
   journal={ANZIAM J. Electron. Suppl.},
   volume={52},
   date={2010},
   number={(C)},
   pages={C206--C221},
   issn={1446-8735},
   review={\MR{2926716}},
}

\bib{MR2718048}{book}{
   author={Skewgar, E. A.},
   title={Behavior of Magellanic penguins at sea},
   note={Thesis (Ph.D.)--University of Washington},
   publisher={ProQuest LLC, Ann Arbor, MI},
   date={2009},
   pages={75},
   isbn={978-1109-61030-7},
   review={\MR{2718048}},
}
	
\end{biblist}

\end{document}